\documentclass[a4paper,12pt, twoside, pdftex]{amsart}

\usepackage{fullpage}
\usepackage{amsmath, amsthm, amssymb}
\usepackage{txfonts}
\usepackage{amsfonts}
\usepackage{prettyref}
\usepackage{mathrsfs}
\usepackage[all]{xy}
\usepackage[utf8]{inputenc}

\usepackage{tikz}
\usepackage{dynkin-diagrams}
\usetikzlibrary{backgrounds}

\usepackage[foot]{amsaddr} %To display affiliations in the title page

\theoremstyle{plain}
\newtheorem{thm}{Theorem}[section]
\newtheorem{cor}[thm]{Corollary}
\newtheorem{lem}[thm]{Lemma}
\newtheorem{prop}[thm]{Proposition}
\newtheorem*{acknowledgements}{Acknowledgements}

\newtheorem*{notations}{Notations and conventions}

\theoremstyle{definition}

\theoremstyle{remark}
\newtheorem{rmk}{Remark}[section]

\numberwithin{equation}{section}

\newrefformat{th}{Theorem~\ref{#1}}
\newrefformat{cr}{Corollary~\ref{#1}}
\newrefformat{lm}{Lemma~\ref{#1}}
\newrefformat{dl}{Definition-Lemma~\ref{#1}}
\newrefformat{df}{Definition~\ref{#1}}
\newrefformat{cl}{Claim~\ref{#1}}
\newrefformat{sl}{Sublemma~\ref{#1}}
\newrefformat{pr}{Proposition~\ref{#1}}
\newrefformat{cj}{Conjecture~\ref{#1}}
\newrefformat{st}{Step~\ref{#1}}
\newrefformat{sc}{Section~\ref{#1}}
\newrefformat{df}{Definition~\ref{#1}}
\newrefformat{rm}{Remark~\ref{#1}}
\newrefformat{q}{Question~\ref{#1}}
\newrefformat{pb}{Problem~\ref{#1}}
\newrefformat{cd}{Condition~\ref{#1}}
\newrefformat{eg}{Example~\ref{#1}}
\newrefformat{he}{Heore~\ref{#1}}
\newrefformat{fg}{Figure~\ref{#1}}
\newrefformat{tb}{Table~\ref{#1}}
\newrefformat{as}{Assumption~\ref{#1}}

\newcommand{\pref}{\prettyref}
\newcommand{\Art}{\operatorname{\textbf{Art}_{\bfk}}}
\newcommand{\codim}{\operatorname{codim}}
\newcommand{\coh}{\operatorname{coh}}
\newcommand{\Coh}{\operatorname{Coh}}
\newcommand{\Coker}{\operatorname{Coker}}
\newcommand{\Cone}{\operatorname{Cone}}
\newcommand{\Def}{\operatorname{Def}}
\newcommand{\Ext}{\operatorname{Ext}}
\newcommand{\Gr}{\operatorname{Gr}}
\newcommand{\Hom}{\operatorname{Hom}}
\newcommand{\id}{\operatorname{id}}
\newcommand{\perf}{\operatorname{perf}}
\newcommand{\Pf}{\operatorname{Pf}}
\newcommand{\rank}{\operatorname{rank}}
\newcommand{\Set}{\operatorname{\textbf{Set}}}
\newcommand{\Spec}{\operatorname{Spec}}
\newcommand{\Spf}{\operatorname{Spf}}
\newcommand{\supp}{\operatorname{supp}}

\newcommand{\cE}{\mathcal{E}}
\newcommand{\cF}{\mathcal{F}}
\newcommand{\cG}{\mathcal{G}}
\newcommand{\cH}{\mathcal{H}}
\newcommand{\cI}{\mathcal{I}}
\newcommand{\cN}{\mathcal{N}}
\newcommand{\cO}{\mathcal{O}}
\newcommand{\cP}{\mathcal{P}}
\newcommand{\cT}{\mathcal{T}}
\newcommand{\cX}{\mathcal{X}}

\newcommand{\bC}{\mathbb{C}}
\newcommand{\bN}{\mathbb{N}}
\newcommand{\bP}{\mathbb{P}}
\newcommand{\bZ}{\mathbb{Z}}

\newcommand{\bfk}{\mathbf{k}}

\newcommand{\bfG}{\mathbf{G}}

\newcommand{\bfQ}{\mathbf{Q}}

\newcommand{\scrX}{\mathscr{X}}

\newcommand{\frakm}{\mathfrak{m}}

%%%%%%%%%%%%%%%%%%%%%%%%%%%%%%%%%%%%%%%%%%%%%%%%%%%%%%%%%%%
\title{Algebraic deformations and Fourier--Mukai transforms for Calabi--Yau manifolds}
\author[H.~Morimura]{Hayato Morimura}
%\address{
%Graduate School of Mathematical Sciences,
%The University of Tokyo,
%3-8-1 Komaba,
%Meguro-ku,
%Tokyo,
%153-8914,
%Japan.}
\address{SISSA, via Bonomea 265, 34136 Trieste, Italy}

\email{
%morimura@ms.u-tokyo.ac.jp,
 hmorimur@sissa.it}

\date{}
\pagestyle{plain}

\begin{document}
\maketitle

\begin{abstract}
Given a pair of derived-equivalent Calabi--Yau manifolds of dimension more than two,
we prove that the derived equivalence can be extended to
general fibers of versal deformations.
As an application,
we give a new proof of the Pfaffian--Grassmannian derived equivalence.
\end{abstract}

\section{Introduction}
Let $X_0$ be a Calabi--Yau manifold of dimension more than two in the strict sense,
i.e.,
a smooth projective variety over a field $\bfk$ with trivial canonical bundle
and
$\text{H}^i (X_0, \cO_{X_0}) = 0$ for $0 < i < \dim X_0$.
Then the deformation functor
\begin{align*}
F_{X_0} = \Def_{X_0} \colon \Art \to \Set
\end{align*}
of $X_0$ has a universal formal family $(R, \xi)$,
which is effective by \cite[Theorem III 5.4.5]{GD61}
and
there exists an effectivization $X_R$ flat and projective over $ R$,
whose formal completion along the closed fiber $X_0$ is isomorphic to $\xi$.
Since deformations of Calabi--Yau manifolds are unobstructed,
the complete local noetherian ring $R$ is regular
and
we have
\begin{align*}
R \cong \bfk \llbracket t_1, \ldots, t_d \rrbracket,
\end{align*}
where $d =\dim_{\bfk} \text{H}^1 (X_0, \cT_{X_0})$.
By \cite[Theorem 1.6]{Art69b} 
there exists a versal deformation $X_S$ flat and of finite type over $S$,
where $S$ is an algebraic $\bfk$-scheme  
with a distinguished closed point $s$
such that
the formal completion along the closed fiber $X_0$ over $s$ is isomorphic to $\xi$.
It is known that the triple $(S, s, X_S)$ is unique only locally around $s$ in the \'etale topology.
%Since $ R$ is isomorphic to the formal completion $\hat{\cO}_{T,t}$ along the closed point $t$,
%where $T = \Spec \bfk[t_1, \ldots, t_d]$
%and
%$t$ corresponds to a maximal ideal $(t_1, \ldots, t_d)$,
%the scheme $S$ is an \'etale neighborhood of $T$ with the closed point $s$ sent to $t$ \cite[Corollary 2.1]{Art69a}. 
Unwinding the construction,
one finds a nonsingular affine variety $S$
over which the versal deformation $X_S$ is smooth projective.
Our main result is the following:

%Thm 
\begin{thm}[ Theorem 4.1 ] \label{thm:VDDeqIntro}
Let $X_0$ and $X_0^\prime$ be derived-equivalent Calabi--Yau manifolds
of dimension more than two.
Then there exists a nonsingular affine variety $S$ over $\bfk$
such that
general fibers of smooth projective versal deformations $X_S$ and $X^\prime_S$ over $S$ %of $X_0$ and $X^\prime_0$ 
are derived-equivalent.
In particular,
after possible shrinking of the base scheme $S$,
the schemes $X_S$ and $X^\prime_S$ are derived-equivalent.
\end{thm}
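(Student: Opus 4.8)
The plan is to realize the given derived equivalence as a Fourier--Mukai transform, to deform its kernel along the versal family, and then to check that the deformed kernel still induces an equivalence on general fibers.

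First I would invoke Orlov's representability theorem to write the equivalence $\Phi \colon D^b(X_0) \xrightarrow{\sim} D^b(X_0')$ as $\Phi = \Phi_{\cP_0}$ for a kernel $\cP_0 \in \perf(X_0 \times_{\bfk} X_0')$, which is a perfect complex since both factors are smooth and projective. The kernel carries an induced isomorphism on Hochschild cohomology $HH^\bullet(X_0) \xrightarrow{\sim} HH^\bullet(X_0')$, and it is the degree-two part that governs first-order deformations. Because $X_0$ is Calabi--Yau of dimension $> 2$ in the strict sense, the only nonvanishing Hochschild--Kostant--Rosenberg component of $HH^2(X_0)$ is $\text{H}^1(X_0, \cT_{X_0})$, and likewise for $X_0'$; hence the degree-two Hochschild isomorphism is an identification $\text{H}^1(X_0, \cT_{X_0}) \cong \text{H}^1(X_0', \cT_{X_0'})$. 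Since both deformation functors $\Def_{X_0}$ and $\Def_{X_0'}$ are smooth and unobstructed, this identification of tangent spaces lifts to an isomorphism of the formal deformation rings $R \cong R'$.

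Second, over this common formal base I would deform $\cP_0$. Working through the Artinian quotients $R \twoheadrightarrow A$ with the corresponding formal families $X_A, X_A'$, the obstruction to extending the kernel across a square-zero extension $A \to A'$ lies in $\Ext^2_{X_0 \times X_0'}(\cP_0, \cP_0)$ and is the pairing of the relative Atiyah class of $\cP_0$ with the Kodaira--Spencer class of the deformation of $X_A \times_A X_A'$. The key point is that the $HH^2$-matching of first-order directions forces this obstruction to vanish at each step: the equivalence $\Phi_{\cP_0}$ intertwines the two Kodaira--Spencer classes, so that the deformation of the product along which $\cP_0$ deforms is exactly the diagonal one compatible with the kernel. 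Inducting over the thickenings yields a compatible system, hence a perfect complex $\hat{\cP}$ on the formal completion of $X_R \times_R X_R'$; as this product is projective over the complete local ring $R$, Grothendieck's existence theorem algebraizes $\hat{\cP}$ to a perfect complex $\cP_R$.

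Finally I would descend from the complete base to the finite-type base $S$. The data of a smooth projective deformation of $X_0$, one of $X_0'$, and a perfect kernel inducing an equivalence between them is parametrized by a functor locally of finite presentation, so Artin's approximation theorem produces, over an \'etale neighborhood of $s$, an algebraic kernel $\cP_S$ restricting to $\cP_0$ at the closed point. It then remains to verify that $\Phi_{\cP_s} \colon D^b(X_s) \to D^b(X_s')$ is an equivalence for general $s$: one checks the equivalence criterion, namely that convolution of $\cP_s$ with its adjoint kernel recovers $\cO_\Delta$ (equivalently, the Bridgeland--Maciocia spanning and orthogonality conditions together with the Calabi--Yau hypothesis), and this is governed by cohomology sheaves that are upper semicontinuous in the family; holding at $s$ by hypothesis, it persists on a dense open locus, so shrinking $S$ gives the claim. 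The main obstacle is the second step: one must arrange the deformation theory of the kernel so that the $HH^2$-matching genuinely annihilates the $\Ext^2$-obstruction at every order, i.e. so that $\cP_0$ deforms precisely along the product of the two versal families rather than along some unrelated deformation of $X_0 \times X_0'$.
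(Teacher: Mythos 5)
Your overall strategy coincides with the paper's (represent the equivalence by a kernel, deform the kernel along Artinian thickenings by killing the $\Ext^2$-obstruction via the Hochschild transport, effectivize, descend to a finite-type base, and conclude generic-fiber equivalence by an openness argument), but the step you yourself flag as ``the main obstacle'' is a genuine gap, and the mechanism you propose for it does not close it. You match tangent spaces $\text{H}^1(X_0,\cT_{X_0})\cong\text{H}^1(X_0',\cT_{X_0'})$ at first order and then ``lift'' this to an isomorphism $R\cong R'$ of the (unobstructed, hence power-series) deformation rings, thereby fixing both formal families in advance and hoping the obstruction vanishes along the product family. But the transport that kills the obstruction at order $n+1$ is $\Phi^{\text{HT}^2}_{\cP_n}$, which depends on the already-deformed kernel $\cP_n$, not only on $\cP_0$; an abstract isomorphism of power-series rings chosen at first order has no reason to intertwine the two universal families to all orders, and with both families fixed a priori the class $A(\cP_n)\cdot(\kappa_n\boxplus\kappa_n')$ need not vanish. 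The paper reverses the logic: at each step it \emph{constructs} the thickening $X_n'\hookrightarrow X_{n+1}'$ as the one whose relative Kodaira--Spencer class is $\kappa_n'=\left(\Phi^{\text{HT}^2}_{\cP_n}\right)^{\oplus l_n}(\kappa_n)$ (\pref{lem:DD}; this requires the relative vanishings $\text{H}^0(\wedge^2\cT_{\pi_n'})=0$ and $\text{H}^2(\cO_{X_n'})=0$ over the Artinian base, obtained by semicontinuity from the strict Calabi--Yau hypothesis), so that the obstruction vanishes by the compatibility of the HKR isomorphisms with $\psi_1,\psi_2$ (\pref{lem:OBSf}). Only afterwards does it observe that the resulting formal family $(R,\tilde{\xi}')$ is isomorphic to the universal one $(R,\xi')$ by universality (pro-representability, which uses $\text{H}^0(\cT_{X_0'})=0$), and this identification is what allows both effectivizations to be algebrized simultaneously over the same \'etale neighborhood $S$. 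This order-by-order construction together with the universality argument is precisely the missing content of your second step.

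Two secondary inaccuracies. First, Grothendieck existence applies to coherent sheaves, not directly to the compatible system of perfect complexes; the paper instead effectivizes with \cite[Proposition 3.6.1]{Lie} and then descends through a filtered system of finite-type subalgebras $R_i\subset R$ via \cite[Proposition 2.2.1]{Lie} before pulling back along $R_i\to\cO_S(S)$ --- it does not run Artin approximation on a moduli functor of triples (deformation, deformation, kernel), whose local finite presentation you would otherwise have to verify. Second, your final step is sound in outline but loosely argued: the paper implements it by taking the cone $F$ of the counit on a strong generator of $D^b(X_S)$ (which exists by \cite[Theorem 3.1.4]{BB}), noting that $\supp(F)$ is closed, misses the closed fiber $X_0$ because $\Phi_{\cP_0}$ is an equivalence, and has closed image under the proper map $\pi_S$; repeating this for the left adjoint and intersecting the two resulting opens gives fiberwise equivalences at every closed point of $U\cap V$, with irreducibility of $S$ guaranteeing this locus is dense.
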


The relationship between deformations and Fourier--Mukai transforms has been addressed
in \cite{Tod} for first order deformations of smooth projective varieties,
in \cite{BBP} for formal deformations of complex tori,
and
in \cite{HMS09} for formal deformations of K3 surfaces
by deforming Fourier--Mukai kernels.
In the above cases,
a relative Fourier--Mukai transform of $n$-th order deformations
induces an isomorphism
which associates
to the direction of a $(n+1)$-th order deformation of one side
that of the other side.
So the fiber product deforms along the pair of the directions
to yield the fiber product of the $(n+1)$-th order deformations.
Then it is natural to ask
whether one can deform the Fourier--Mukai kernel to a perfect complex
on the fiber product of the $(n+1)$-th order deformations,
and
the relative integral functor defined by the deformed perfect complex is an equivalence.

For Calabi--Yau manifolds of dimension more than two,
the isomorphism induced by a relative Fourier--Mukai transform
connects a pair of $(n +1)$-th order deformations of complex structures. 
Moreover,
since the effectivizations
$X_R$
and
$X^\prime_R$
are smooth over $R$,
the obstruction class to deforming a perfect complex \cite{Low, Lie}
is given by the product of
the relative Atiyah class
and 
the relative Kodaira--Spencer class \cite[Corollary 3.4]{HT10}.
In this paper,
based on the argument in \cite[Section 3]{HMS09},
we deform a Fourier--Mukai kernel
defining the derived equivalence of $X_0$ and $X^\prime_0$
along the sequence of the natural quotient maps
\begin{align*}
\cdots \to R/\frakm^{n+2}_R \to R/\frakm^{n+1}_R \to R/\frakm^n_R \to \cdots.
\end{align*}
%Here,
%the nice geometric features of Calabi--Yau manifolds play a role,
%as well as in the construction of the smooth projective versal deformations. 
From the compatible system of deformed Fourier--Mukai kernels,
we obtain an effectivization as a perfect complex
by \cite[Proposition 3.6.1]{Lie}.
Passing through a filtered inductive system
of finitely generated subalgebras of $R$
whose colimit is $R$,
we obtain a perfect complex on $X_S \times_S X^\prime_S$
which restricts to the Fourier--Mukai kernel on $X_0 \times X^\prime_0$    
by \cite[Proposition 2.2.1]{Lie}
and
the construction of the versal deformations.
Then the standard argument shows that
the relative integral functor defined by the perfect complex is an equivalence.
One can also show the derived equivalence of effectivizations of universal formal families,
in particular,
that of formal deformations of $X_0$ and $X^\prime_0$.
%Although it might be known among experts,
%we provide a proof.
%Corollary \pref{cor:GFdeq}, Corollary \pref{cor:FDdeq}.

As an application,
we give a new (but slightly weaker) proof of the Pfaffian--Grassmannian equivalence,
which is conjectured in \cite{Rod},
explained in \cite{0803.2045} from a physical perspective,
and
proved in \cite{BC, 0610957, ADS}.
Via \pref{thm:VDDeqIntro},
the derived equivalence is induced by that of the complete intersections of $G_2$-Grassmannians \cite{Kuz18, Ued}.
Similarly, due to \cite[Proposition 4.7]{IIM}
the derived equivalence of the intersections of two Grassmannians in $\bP^9$ \cite{BCP}
is induced by
that of the complete intersections in $G (2, 5)$ \cite{KR, Mor}.
We expect to find a new example of Fourier--Mukai partners through deformation methods using \pref{thm:VDDeqIntro}.

\begin{notations}
We work over an algebraically closed field $\bf{k}$ of characteristic 0 throughout the paper.
For an augmented $\bf{k}$-algebra $A$,
by $\frakm_A$ we denote its augmentation ideal.
For a noetherian formal scheme $\scrX$,
by $D^b (\scrX)$ we denote the bounded derived category of the abelian category $\Coh (\scrX)$ of coherent sheaves on $\scrX$. 
\end{notations}

\begin{acknowledgements}
The author would like to express his gratitude to Kazushi Ueda for suggesting the problem.
The author would like to thank Paolo Stellari for inviting him to the university of Milan, his hospitality, and helpful discussions. 
The author also would like to thank Andrea Tobia Ricolfi for offering many corrections to earlier versions of this paper. 
The author thanks Yukinobu Toda for informing the author on the paper \cite{HMS09},
Atsushi Ito and Makoto Miura on the paper \cite{Kuh}.
\end{acknowledgements}

%%%%%%%%%%%%%%%%%%%%%%%%%%%%%%%%%%%%%%%%%%%%%%%%%%%%%%

%%%%%%%%%%%%%%%%%%%%%%%%%%%%%%%%%%%%%%%%%%%%%%%%%%%%%%
\section{Smooth projective versal deformations}
When it comes to deformations,
Calabi--Yau manifolds are equipped with nice geometric features.
%They mainly stem from the vanishing of the relevant cohomology.  
In this section, after reviewing some basics on deformation theory of schemes,
we explain how to construct smooth projective versal deformations of Calabi--Yau manifolds
of dimension more than two. 

%subsection
\subsection{Infinitesimal deformations of schemes}
Let $X$ be a $\bfk$-scheme.
A deformation of $X$ over a local artinian $\bfk$-algebra $A$ with residue field $\bfk$ is a pair $(X_A, i_A)$,
where $X_A$ is a scheme flat over $A$
and
$i_A \colon X \hookrightarrow X_A$ is a closed immersion
such that
the induced map $X \to X_A \times_A \bfk$ is an isomorphism.
Two deformations $(X_A, i_A)$ and $(Y_A, j_A)$ are said to be equivalent
if there is an $A$-isomorphism $X_A \to Y_A$ compatible with $i_A$  and $j_A$.
The deformation functor $F_X = \Def_X \colon \Art \to \Set$ sends each $A \in \Art$
to the set of equivalence classes of deformations of $X$ over $A$.

Assume that $X$ is projective over $\bfk$.
Then $F_X$ satisfies Schlessinger's criterion \cite{Sch}
and there exists a miniversal formal family $(R, \xi)$ for $F_X$ \cite[Theorem 18.1]{Har10},
where $R$ is a complete local noetherian $\bfk$-algebra with residue field $\bfk$,
and
$\xi$ belongs to the limit
\begin{align*}
\hat{F}_X (R) = \displaystyle \lim_{\longleftarrow} F_X (R / \frakm^n_R)
\end{align*}
of the inverse system
\begin{align*}
\cdots \to F_X (R/\frakm_R^{n+2}) \to F_X (R/\frakm_R^{n+1}) \to F_X (R/\frakm_R^n) \to \cdots
\end{align*}
induced by the natural quotient maps $R/\frakm^{n+1}_R \to R/\frakm^n_R$.
The formal family $\xi$ corresponds to a natural transformation 
\begin{align} \label{eq:Cfun}
h_R = \Hom_{\operatorname{\bfk-alg}} (R, -) \to F_X,
\end{align}
which sends each homomorphism $f \in h_R (A)$ factorizing through
$R \to R / \frakm^{n+1}_R \xrightarrow{g} A$
to
$F_X (g) (\xi_n)$ \cite[Proposition 15.1]{Har10}.
The functor \pref{eq:Cfun} is strongly surjective by versality of $\xi$.
So for every surjection $B \to A$ in $\Art$ the map
\begin{align*}
h_R (B) \to h_R (A) \times_{F_X (A)} F_X (B)
\end{align*}
is surjective.
In particular, the map $h_R (A) \to F_X (A)$ is surjective  for each $A \in \Art$. 

Let $X_n$ be the schemes which define $\xi_n$.
Then by \cite[Proposition 21.1]{Har10} there is a noetherian formal scheme $\scrX$ over $R$
such that
$X_n \cong \scrX \times_R R / \frakm^{n+1}_R$ for each $n$.
By abuse of notation,
we use the same symbol $\xi$ to denote the formal scheme $\scrX$.
Thus any scheme which defines an equivalence class $[X_A, i_A]$ can be obtained as the pullback of $\xi$
along some morphism of noetherian formal schemes $\Spec A \to \Spf R$.
If $X$ is regular,
then the Zariski tangent space of $\Spec R$ at the closed point is $\text{H}^1 (X, \cT_X)$.
Assume further that $\text{H}^0 (X, \cT_X) = 0$,
i.e.,
the scheme $X$ has no infinitesimal automorphisms which restrict to the identity of $X$.
Then every equivalence class $[X_A, i_A]$ is just a deformation $(X_A, i_A)$
and
we have $h_R \simeq F_X$ \cite[Corollary 18.3]{Har10}.
In this case, the functor $F_X$ is said to be pro-representable
and
$(R, \xi)$ a universal formal family for $F_X$.
%However, two deformations $(X_A, i_A)$ and $(X_A, j_A)$ define different equivalence classes
%when there is an automorphism of $X_A$ restricting to a nontrivial automorphism of $X$.  

%subsection
\subsection{Algebraization}
Towards an algebraic family of deformations of $X$,
the first step is to find a  scheme $X_R$ flat and of finite type over $R$
whose formal completion along the closed fiber $X$ is isomorphic to $\xi$.
If $X$ is projective and $\text{H}^2 (X, \cO_X) = 0$,
i.e.,
deformations of any invertible sheaf on $X$ are unobstructed,
then by \cite[Theorem III5.4.5]{GD61}
there exists such a scheme $X_R$.
%See also %\cite[Theorem 21.2]{Har10}.
In this case, the formal family $(R,\xi)$ is said to be effective.
One sees that
the scheme $X_R$ appeared in the proof of \cite[Theorem III5.4.5]{GD61}  is projective over $R$.
We will call such $X_R$ an effectivization of $\xi$.

The next step is to find an algebraic $\bfk$-scheme $S$ with a distinguished closed point $s \in S$,
and
a scheme $X_S$ flat and of finite type over $S$
whose formal completion along the closed fiber $X$ over $s$ is isomorphic to $\xi$.
The deformation functor $F_X$ can naturally be extended to a functor defined on the category
$\operatorname{\textbf{k-alg}^\text{aug}}$
of augmented noetherian $\bfk$-algebras.
By abuse of notation,
we use the same symbol $F_X$ to denote the extended functor,
which sends each $(B, \frakm_B) \in \operatorname{\textbf{k-alg}^\text{aug}}$ to the set of equivalence classes of deformations over $(B, \frakm_B)$.
The following is the well-known fact necessary to prove the existence of such a triple $(S, s, X_S)$.
We provide a proof as we could not find any reference in the literature.

%Lem
\begin{lem} \label{lem:LFP}
Let $X$ be an algebraic $\bfk$-scheme.
Then the functor
$F_X$ %$F_X \colon \operatorname{\textbf{k-alg}^\text{aug}} \to \Set$
is locally of finite presentation,
i.e.,
for every filtered inductive system of augmented noetherian $\bfk$-algebras
$\left\{ \left( B_i, \frakm_{B_i} \right) \right\}_{i \in I}$
whose colimit is $B$,
%$B = \displaystyle \lim_{\longrightarrow} B_i$
the canonical map
\begin{align*}
\displaystyle \lim_{\longrightarrow} F_X \left( \left( B_i, \frakm_{B_i} \right) \right) \to F_X \left( \left( B, \frakm_B \right) \right)
\end{align*}
is bijective.
\end{lem}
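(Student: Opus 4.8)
The plan is to reduce the lemma to the standard approximation results for schemes of finite presentation along filtered colimits of rings (EGA IV, \S 8). Recall that, $X$ being an algebraic $\bfk$-scheme, a deformation $(X_B, i_B)$ of $X$ over $B$ is understood to be flat \emph{and of finite presentation} over $B$; this is exactly the hypothesis under which the limit formalism applies. Throughout I will use that the augmentations are compatible with the transition maps, so that $\frakm_B = \varinjlim \frakm_{B_i}$ and the closed point $\Spec \bfk \to \Spec B$ is the image of the compatible closed points $\Spec \bfk \to \Spec B_i$; in particular the formation of the special fibre over $\Spec \bfk$ is compatible along the whole system, which is what lets me transport the fibre identification $i_B$ for free.

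First I would prove surjectivity. Let $(X_B, i_B)$ be a deformation of $X$ over $B$. By the approximation of finitely presented schemes (EGA IV, 8.8.2) there are an index $i$ and a finitely presented $B_i$-scheme $Y_i$ with $Y_i \times_{B_i} B \cong X_B$, and after enlarging $i$ I may assume $Y_i$ is flat over $B_i$ (EGA IV, 11.2.6). Because the augmentations are compatible, the special fibre $Y_i \times_{B_i} \bfk$ coincides with $X_B \times_B \bfk$, namely $X$; thus the isomorphism $i_B \colon X \xrightarrow{\sim} X_B \times_B \bfk$ is at the same time an isomorphism $X \xrightarrow{\sim} Y_i \times_{B_i} \bfk$, and $(Y_i, i_{B_i})$ is a deformation of $X$ over $B_i$ whose class maps to $[X_B, i_B]$. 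Hence the canonical map is surjective.

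Next I would prove injectivity. Suppose two classes of the source map to the same class in $F_X(B)$. By filteredness I may represent them by deformations $(Y_i, \phi_i)$ and $(Z_i, \psi_i)$ over a common index $i$, and the hypothesis provides a $B$-isomorphism $f \colon Y_i \times_{B_i} B \xrightarrow{\sim} Z_i \times_{B_i} B$ compatible with the special-fibre identifications. By the descent of morphisms between finitely presented schemes (EGA IV, 8.8.2) the isomorphism $f$ descends to a $B_j$-morphism $f_j \colon Y_i \times_{B_i} B_j \to Z_i \times_{B_i} B_j$ for some $j \ge i$, which after a further enlargement of $j$ is again an isomorphism (EGA IV, 8.10.5). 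Since restriction to the special fibre commutes with base change and the special fibre is the same $X$ at every stage, $f_j$ restricts on the special fibre to the same map as $f$; therefore $f_j$ is automatically compatible with $\phi_j$ and $\psi_j$, and the two classes already agree over $B_j$. This proves injectivity, and hence bijectivity.

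The bookkeeping of the identifications on special fibres is routine; the substantive input is entirely the limit formalism of EGA IV, \S 8. I expect the only genuine (and mild) obstacle to be the finite-presentation hypothesis, which is exactly where the assumption that $X$ is algebraic enters: for a general flat $X_B$ with special fibre $X$ the structure morphism need not be of finite presentation over $B$, and then neither the descent of the scheme $X_B$ nor that of the equivalence $f$ would be available. Once finite presentation of deformations and of their equivalences is in force, the three cited descent statements apply verbatim.
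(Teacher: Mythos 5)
Your proof is correct and takes essentially the same route as the paper: both reduce the lemma to the limit formalism of EGA~IV~\S 8, descending the flat, finitely presented deformation for surjectivity (the paper cites IV.11.2.7, which packages your combination of IV.8.8.2 and IV.11.2.6) and descending the $B$-isomorphism for injectivity (the paper cites IV.8.8.2 and IV.8.8.2.4 where you cite IV.8.8.2 and IV.8.10.5), with the same bookkeeping of special-fibre identifications via the compatible augmentations. Your explicit remark that noetherianity makes finite type equal finite presentation, so the descent machinery applies, fills in a point the paper leaves implicit.
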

\begin{proof}
To show the surjectivity,
let $[ X_B, i_B ]$ be an element in $F_X \left( \left( B, \frakm_B \right) \right)$.
By \cite[Corollary IV11.2.7]{GD66} 
for some index $\lambda \in I$
there exists a scheme $X_{B_\lambda}$ flat and of finite type over $B_\lambda$
with a $B$-isomorphism
$X_B \to X_{B_\lambda} \times_{B_\lambda} B$.
Then an element
$\left\{ [ X_{B_k}, i_{B_k} ] \right\}_{k \geq \lambda} \in \displaystyle \lim_{\longrightarrow} F_X \left( \left( B_i, \frakm_{B_i} \right) \right)$
is sent to
$[ X_B, i_B ]$
by the canonical map.

To show the injectivity,
let $\left\{ [ X_{B_k}, i_{B_k} ] \right\}_{k \geq j}$
and
$\left\{ [ Y_{B_k}, j_{B_k} ] \right\}_{k \geq j}$
be two elements sent to the same equivalence class
$[ X_B, i_B ] = [ Y_B, j_B ]$. 
By \cite[Theorem IV8.8.2, Corollary IV8.8.2.4]{GD66}
for some index $l \geq j$
there is a $B_l$-isomorphism $X_{B_l} \to {Y_{B_l}}$ sent to the $B$-isomorphism $X_B \to Y_B$.
Since we have $[ X_B, i_B ] = [ Y_B, j_B ]$,
the isomorphism is compatible with $i_{B_l}$ and $j_{B_l}$.
Thus
$\left\{ [ X_{B_k}, i_{B_k} ] \right\}_{k \geq l}$
and
$\left\{ [ Y_{B_k}, j_{B_k} ] \right\}_{k \geq l}$
define the same element in
$\displaystyle \lim_{\longrightarrow} F_X \left( \left( B_i, \frakm_{B_i} \right) \right)$.
\end{proof}

By \pref{lem:LFP}
one can apply \cite[Theorem 1.6]{Art69b} to obtain such a triple $(S, s, X_S)$.
%See also \cite[Theorem 21.3]{Har10}. 
The scheme $X_S$ is said to be a versal deformation over $S$
and
the miniversal formal family $(R, \xi)$ is said to be algebraizable.
Since some details are necessary in the sequel,
we show the existence of a versal deformation
when $X$ is a higher dimensional Calabi--Yau manifold.
 
%Thm
\begin{thm} \label{thm:Alg}
Let $X_0$ be a Calabi--Yau manifold of dimension more than two.
Then every effective universal formal family $(R, \xi)$ for $F_{X_0}$ is algebraizable.
\end{thm}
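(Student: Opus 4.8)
The plan is to obtain the triple $(S, s, X_S)$ as a direct application of Artin's algebraization theorem \cite[Theorem 1.6]{Art69b} to the deformation functor $F_{X_0}$, extended to $\operatorname{\textbf{k-alg}^\text{aug}}$ as above. That theorem requires three inputs, each of which is already at our disposal. Local finite presentation of $F_{X_0}$ is exactly \pref{lem:LFP}. Effectivity of $(R, \xi)$ is the standing hypothesis: it supplies an effectivization $X_R$, flat and projective over $R$, whose formal completion along the closed fiber $X_0$ is isomorphic to $\xi$, so that $\xi$ genuinely arises from a family of schemes over the full ring $R$ and not merely from a compatible system of artinian truncations. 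Formal versality is built into the universal formal family, being the strong surjectivity of the natural transformation \pref{eq:Cfun}. Finally, the base ring is of the shape required by \cite[Theorem 1.6]{Art69b}: unobstructedness of Calabi--Yau deformations gives $R \cong \bfk \llbracket t_1, \ldots, t_d \rrbracket$, which is the completion at the origin of the smooth finite-type $\bfk$-algebra $\bfk[t_1, \ldots, t_d]$.

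With these verifications in place I would invoke \cite[Theorem 1.6]{Art69b} to produce an algebraic $\bfk$-scheme $S$ of finite type, a closed point $s \in S$ with residue field $\bfk$, and a scheme $X_S$ flat and of finite type over $S$ whose formal completion along the closed fiber $X_0$ over $s$ is isomorphic to $\xi$. This is precisely the triple witnessing algebraizability, so $(R, \xi)$ is algebraizable. To reach the smooth projective family anticipated in the introduction, I would then replace $S$ by an affine neighborhood of $s$: regularity of $R$ forces $S$ to be nonsingular at $s$, while smoothness and projectivity of $X_R$ over $\Spec R$ propagate to a neighborhood of $s$, being open conditions satisfied at the closed point; after shrinking, $X_S \to S$ becomes smooth and projective.

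The delicate point, and the one I would treat most carefully, is the passage inside Artin's machine from formal to finite-type data, namely arranging that the algebraic family produced recovers $\xi$ exactly upon completion rather than only agreeing with it to finite order. This is where effectivity and \pref{lem:LFP} are used in tandem: effectivity lifts $\xi$ to the scheme $X_R$ over $R$ itself, and local finite presentation then guarantees that $X_R$, together with the isomorphism identifying its completion with $\xi$, already descends to a finitely generated $\bfk$-subalgebra of $R$, hence to a finite-type family. Matching the functor-theoretic conclusion of \cite[Theorem 1.6]{Art69b} with the scheme-theoretic statement of the theorem, and checking that the versality it demands is exactly what the universal family supplies, is the substance of the proof; the remaining verifications are formal.
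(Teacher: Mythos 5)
Your proposal is correct, and the paper itself concedes this route: immediately before the theorem it notes that \pref{lem:LFP} makes \cite[Theorem 1.6]{Art69b} applicable, so your black-box invocation (local finite presentation via \pref{lem:LFP}, effectivity by hypothesis, formal versality from universality of $\xi$) does prove bare algebraizability. The paper's own proof, however, deliberately re-runs Artin's algebraization by hand via the approximation theorem \cite[Corollary 2.1]{Art69a}: it realizes $R$ as the colimit of a filtered system of finitely generated $\cO_T(T)$-subalgebras $R_i$, where $T = \Spec \bfk[t_1, \ldots, t_d]$, descends $[X_R, i_R]$ to some $\zeta_i \in F_{X_0}\left( \left( R_i, \frakm_{R_i} \right) \right)$ using \pref{lem:LFP}, approximates the formal solution $\hat{y}$ of a finite-type presentation of $R_i$ by an honest solution $y$ over an \'etale neighborhood $S$ of the origin with $y \equiv \hat{y} \ (\bmod\ \frakm_R^2)$, and takes $X_S$ to be the image of $\zeta_i$ under $\varphi \colon R_i \to \cO_S(S)$. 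What the explicit route buys is exactly the data your version leaves hidden inside Artin's machine and which the sequel needs: $S$ comes out \'etale over affine space (hence the nonsingular affine variety of \pref{lem:SFP}), and the homomorphism $\varphi$ from a finitely generated subalgebra is what \pref{lem:SFP} and Section 3 use to spread out projectivity and smoothness and to descend the deformed Fourier--Mukai kernel. Two cautions on your write-up: first, the exact agreement $\hat{X}_S \cong \xi$ of completions, rather than agreement to second order, is obtained not from ``effectivity and \pref{lem:LFP} in tandem'' as you suggest, but from versality --- one lifts through the quotients $R/\frakm_R^{n+1}$ to build $\psi \colon R \to R$ matching the two formal families, and since $\psi$ is the identity modulo $\frakm_R^2$ it is an automorphism; this is the final step of the paper's proof. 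Second, smoothness and projectivity of $X_S \to S$ near $s$ are not simply ``open conditions propagating from $X_R$,'' since those properties hold over the completion $\Spec R$ rather than over a Zariski neighborhood in $S$; making that propagation honest is the content of \pref{lem:SFP}, which again uses the descent through the $R_i$ together with \cite[Theorem IV8.10.5]{GD66}. Neither caution affects the statement actually claimed, but both explain why the paper proves the theorem concretely instead of citing \cite[Theorem 1.6]{Art69b} outright.
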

\begin{proof}
Let $T = \Spec \bfk [ t_1, \ldots, t_d ]$
and
$t \in T$ be the closed point corresponding to a maximal ideal $(t_1, \ldots, t_d)$.
Since the formal completion of $\cO_T (T)$ along $(t_1, \ldots, t_d)$ is isomorphic to $R$,
%We have the unique homomorphism $\bfk [ t_1, \ldots, t_d ] \to R$.
there is a filtered inductive system $\{ R_i \}_{i \in I}$
of finitely generated $\cO_T (T)$-subalgebras of $R$
whose colimit is $R$.
%$\displaystyle \lim_{\longrightarrow} R_i = R$
%$\widetilde{\cO_{T,t}}$ is noetherian
Choose a finite type presentation
\begin{align*}
R_i = \cO_T (T) [Y] / \left( f \left (Y \right) \right),
\end{align*}
where $Y = (Y_1, \ldots, Y_N)$ and $f = (f_1, \ldots, f_m)$.
Then we have the solution
$\hat{y} = (\hat{y}_1, \ldots, \hat{y}_N)$
of
$f(Y ) = 0$
in $R$ corresponding to the canonical homomorphism $R_i \to R$ \cite[Corollary 1.6]{Art69a}.
Since $F_{X_0}$ is locally of finite presentation,
$[ X_R, i_R ]$ is the image of some element
$\zeta_i \in F_{X_0} \left( \left( R_i, \frakm_{R_i} \right) \right)$
by the canonical map
$F_{X_0} \left( \left( R_i, \frakm_{R_i} \right) \right) \to F_{X_0} (R)$.
%Let $\widetilde{\cO_{T,t}}$ be the henselization of $\cO_T (T)$.
%Recall that we have 
%\begin{align}
%\widetilde{\cO_{T,t}} = \displaystyle \lim_{\longrightarrow} \cO_{T^\prime} (T^\prime),
%\end{align}
%where the direct limit is taken as $(T^\prime, t^\prime)$ runs over the filtered category of \'etale neighborhoods of $t$ in $T$.   
%We may assume that all coefficients of $f$ belong to $\cO_S (S)$ for some \'etale neighborhood $S$.
%By the approximation theorem \cite[Theorem 1.10]{Art69a},
%there exists a solution
%$y = (y_1, \cdots, y_N)$
%in
%$\widetilde{\cO_{T,t}}$
%with
%\begin{align} \label{eq:SOL}
%y_i \equiv \hat{y_i} \ \ ( \bmod \ \frakm^2),
%\end{align}
%i.e., $y$ and $\hat{y}$ induce the same element in $F_{X_0} ( R / \frakm^2_R)$.
%Note that we have $\widetilde{\cO_{T,t}} / \frakm^2_{\widetilde{\cO_{T,t}}} = R / \frakm^2_R$.
%We may assume further that the solution belongs to $\cO_S (S)$.
By \cite[Corollary 2.1]{Art69a} there exist an \'etale neighborhood $S$ of $t$ in $T$,
and a solution
$y = (y_1, \ldots, y_N)$
in
$\cO_S (S)$
with
\begin{align} \label{eq:SOL}
y_i \equiv \hat{y_i} \ \ ( \bmod \ \frakm^2_R),
\end{align}
i.e., $y$ and $\hat{y}$ induce the same element in $F_{X_0} ( R / \frakm^2_R )$.
%Note that we have $\cO_S (S) / \frakm^2_{\cO_S (S)} = R / \frakm^2_R$.
Let $\varphi \colon R_i \to \cO_S (S)$ be the homomorphism corresponding to the solution $y$,
and let $[ X_S, i_S ]$ be the image of $\zeta_i$ by the map $F_{X_0} (\varphi)$
and
$\{ \eta_n \}_{n \in \bN}$ the formal family induced by $[ X_S, i_S ]$.
From \pref{eq:SOL} it follows
\begin{align*}
F_{X_0} (\psi_1) ([ X_R, i_R ]) = \xi_1 = \eta_1,
\end{align*}
where $\psi_1 \colon R \to R / \frakm^2_R$ is the natural surjection. 
By versality of $(R, \xi)$
there is a compatible sequence of homomorphisms
$\psi_n \colon R \to R / \frakm^{n+1}_R$
lifting $\psi_{n - 1}$
and
such that
$F_{X_0} (\psi_n) ([ X_R, i_R ]) = \eta_n$
for every positive integer $n$.
The sequence $\{ \psi_n \}_{n \in \bN}$ induces a homomorphism $\psi \colon R \to R$
such that
\begin{align*}
F_{X_0} (\psi) ([ X_R, i_R ]) \equiv \eta_n \ \ ( \bmod \ \frakm^{n+1}_R).
\end{align*}
Since $\psi$ is the identity modulo $\frakm^2_R$,
it is an automorphism. 
Thus the formal completion of $X_S$ along the closed fiber $X_0$ is isomorphic to $\xi$. 
\end{proof}

%subsection
\subsection{Smoothness and projectivity}
If $S^\prime$ is an \'etale neighborhood of $s$ in $S$,
then the scheme $X_{S^\prime}$ obtained in the same way gives another versal deformation.
The following lemma is crucial for the rest of the paper.

%Lem
\begin{lem} \label{lem:SFP}
Let $X_0$ be a Calabi--Yau manifold of dimension more than two.
Then there exists a nonsingular affine variety $S$ over $\bfk$
with a versal deformation $X_S$
which is  projective
and
smooth of relative dimension $\dim X_0$ over $S$.
\end{lem}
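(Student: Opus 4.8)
The plan is to take the triple $(S, s, X_S)$ produced in the proof of \pref{thm:Alg} and shrink $S$ in three successive steps, each an open or étale restriction around $s$ that preserves the properties already secured. For the base itself, recall that $S$ arises there as an étale neighborhood of the closed point $t$ in $T = \Spec \bfk [t_1, \ldots, t_d]$. Since $T$ is smooth over $\bfk$ and smoothness is preserved under étale morphisms, $S$ is automatically nonsingular; replacing $S$ by an affine open neighborhood of $s$, and passing to the connected component through $s$ if needed, I may assume that $S$ is a nonsingular affine variety. This replacement is harmless, since the isomorphism between $\xi$ and the formal completion of $X_S$ along the closed fiber depends only on the local structure at $s$.

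Next I would secure projectivity. By construction $[X_S, i_S] = F_{X_0}(\varphi)(\zeta_i)$ for the homomorphism $\varphi \colon R_i \to \cO_S(S)$, so that $X_S \cong X_{R_i} \times_{R_i} \cO_S(S)$ for a scheme $X_{R_i}$ flat and of finite type over $R_i$ with $X_{R_i} \times_{R_i} R \cong X_R$. The effectivization $X_R$ is projective over $R$, hence carries a relatively very ample invertible sheaf. Since $R$ is the filtered colimit of the subalgebras $R_i$ and the morphism is of finite presentation, I would descend this very ample sheaf, and with it projectivity, to $X_{R_i}$ over $R_i$ after enlarging the index $i$, using the limit theorems of \cite{GD66}. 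As projectivity is stable under base change along $\varphi$, the scheme $X_S$ is then projective over $S$.

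Finally I would arrange smoothness. The morphism $X_S \to S$ is flat and of finite type, with fiber $X_0$ over $s$, which is smooth over $\bfk$ of dimension $\dim X_0$. The smooth locus of $X_S \to S$ is open, so its complement $Z$ is closed, and since $X_S \to S$ is now proper, the image of $Z$ in $S$ is a closed subset not containing $s$. Shrinking $S$ to an affine open neighborhood of $s$ disjoint from this image yields a smooth morphism, and flatness together with the smooth closed fiber forces the relative dimension to be constantly $\dim X_0$ near $s$. This neighborhood is again nonsingular and affine, so all three properties hold simultaneously.

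The main obstacle is the descent of projectivity in the second step: one must carefully invoke the limit results of \cite{GD66} to descend the relatively ample sheaf from the colimit ring $R$ to a finite stage $R_i$ before base changing along $\varphi$, and check its compatibility with the effectivization $X_R$. Once this is in place, the nonsingularity and smoothness steps are essentially topological, resting on the openness of the smooth locus and the properness supplied by projectivity.
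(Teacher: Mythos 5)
Your proposal is correct, and for the base and for projectivity it runs parallel to the paper: nonsingularity of $S$ comes for free from \'etaleness over $T = \Spec \bfk[t_1,\ldots,t_d]$, and descending projectivity from $X_R$ to a finite stage $X_{R_k}$ of the filtered system before base changing along $\varphi$ is exactly what the paper does, citing \cite[Theorem IV8.10.5]{GD66}, which packages the descent of the relatively ample sheaf you describe. Where you genuinely diverge is the smoothness step. The paper first proves that $X_R$ is regular and that $\pi_R \colon X_R \to \Spec R$ is smooth of relative dimension $\dim X_0$ (Lemma~\ref{lem:sm}), then descends smoothness to some $X_{R_l} \to \Spec R_l$ via \cite[Tags 00TP and 07C3]{SP}, so that smoothness over \emph{all} of $S$ --- with no further shrinking --- follows by base change. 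You instead argue directly on $\pi_S \colon X_S \to S$: openness of the smooth locus plus properness lets you excise the image of the non-smooth locus $Z$. This is more elementary, since it bypasses Lemma~\ref{lem:sm} entirely and uses only the smoothness of the closed fiber $X_0$, at the cost of one extra (harmless) shrink of $S$; an open affine neighborhood of $s$ is again an \'etale neighborhood of $t$, so versality is preserved, as the paper notes. Two steps you leave implicit should be made explicit. First, to conclude $s \notin \pi_S(Z)$ you need the smooth locus to contain the \emph{entire} fiber $\pi_S^{-1}(s) = X_0$, which is the fiberwise criterion for smoothness: a flat morphism locally of finite presentation is smooth at every point of a fiber that is smooth over the residue field; smoothness of $X_0$ over $\bfk$ alone does not formally give this without citing that criterion. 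Second, the claim that flatness and the smooth closed fiber force the relative dimension to be constantly $\dim X_0$ is cleanest via the locally constant fiber-dimension function for proper flat morphisms, \cite[Tag 0D4J]{SP}, together with connectedness (hence irreducibility) of $S$ --- which is precisely how the paper argues, and also where it observes that every irreducible component of $X_S$ dominates $S$ and so meets the closed fiber. With those two references supplied, your argument is complete and yields the same statement.
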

\begin{proof}
An \'etale neighborhood of $t$ in $T$ is smooth over $\bfk$.
Since an open immersion is \'etale,
we may assume that $S$ is connected.
Then $S$ must be irreducible,
otherwise the local ring $\cO_{S,s}$ has more than one minimal prime ideal
for every point $s$ in the intersection of irreducible components.
We already know that $X_S$ is flat over $S$.
Since $X_R$ is projective over $R$,
by \cite[Theorem  IV8.10.5]{GD66} there exists an index $j$
such that
for all $k \geq j$ the schemes $X_{R_k}$ are projective over $R_k$.
A base change of projective morphism is projective \cite[Tag 02V6]{SP}. 

Since $S$ is irreducible
and
$\pi_S \colon X_S \to S$ is flat and proper,
the restriction of $\pi_S$ to each irreducible component of $X_S$ is surjective.
%$\pi_S$ sends each irreducible component to a closed subset of $S$ containing the generic point.
In particular, each irreducible component contains the closed fiber $X_0$
and
we have
\begin{align*}
\operatorname{rel.dim (\pi_S)} = \dim X_0.
\end{align*}
%Since $X_0$ is regular, irreducible and embedded via continuous map,
%the closed fiber over $s$ is irreducible.
%Use also \cite[Corollary III9.6]{Har}.
Note that the function
\begin{align*}
n_{X_S / S} \colon S \to \bZ_{\geq 0} \cup \{ \infty \},
\end{align*}
which sends every point $s \in S$ to the dimension of the fiber over $s$ is locally constant,
since $\pi_S$ is flat and proper \cite[Tag 0D4J]{SP}.
Again, we have used the irreducibility of $S$. 

Due to \pref{lem:sm} below,
the morphism $\pi_R \colon X_R \to \Spec R$ is smooth.
We claim that there is an index $l$
such that
for all $k \geq l$ the morphisms $X_{R_k} \to \Spec R_k$ are smooth.  
To show this,
we may assume that $X_{R_i}$ are affine.
Let $R \to B$ be the ring homomorphism corresponding to $\pi_R$.
Then there exists a finitely generated $\bZ$-subalgebra $R_0$ of $R$
and
a smooth ring homomorphism $R_0 \to B_0$
such that
$B \cong B_0 \otimes_{R_0} R$ \cite[Tag 00TP]{SP}.
By \cite[Tag 07C3]{SP} the inclusion $R_0 \to R$ factors through $R_l$ for some index $l$.
Since smoothness is stable under base change,
the claim follows.
\end{proof}

%Lem
\begin{lem} \label{lem:sm}
The scheme $X_R$ is regular
and
the morphism $\pi_R \colon X_R \to \Spec R$ is smooth of relative dimension $\dim X_0$.
\end{lem}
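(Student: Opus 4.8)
The plan is to prove smoothness of $\pi_R$ first and then derive regularity of $X_R$ from it. As $X_R$ is projective, hence of finite type, over the noetherian ring $R$, the morphism $\pi_R$ is of finite presentation and its smooth locus $U \subseteq X_R$ is open. The first point is that $U$ contains the entire closed fiber: this fiber is $X_0$, which is smooth over $\bfk = R/\frakm_R$ by hypothesis, and since $\pi_R$ is flat, the fibrewise criterion for smoothness of a flat morphism \cite[Tag 01V9]{SP} shows that $\pi_R$ is smooth at every point of $X_0 = \pi_R^{-1}(\frakm_R)$.

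Next I would propagate smoothness to all of $\Spec R$ using properness together with the locality of $R$. The complement $X_R \setminus U$ is closed, so $\pi_R(X_R \setminus U)$ is closed in $\Spec R$ because $\pi_R$ is projective, hence a closed map. By the previous step this image does not contain the closed point $\frakm_R$. Since $\Spec R$ is local, every nonempty closed subset contains $\frakm_R$; hence $\pi_R(X_R \setminus U) = \emptyset$, so $X_R \setminus U = \emptyset$ and $\pi_R$ is smooth everywhere. For the relative dimension, the fibre dimension is locally constant on $\Spec R$ because $\pi_R$ is flat and proper \cite[Tag 0D4J]{SP}, and $\Spec R$ is connected, being the spectrum of a local ring; its value over the closed point is $\dim X_0$, so the relative dimension of $\pi_R$ is $\dim X_0$ throughout.

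Finally, regularity of $X_R$ follows from regularity of the base. Because deformations of Calabi--Yau manifolds are unobstructed, $R \cong \bfk \llbracket t_1, \ldots, t_d \rrbracket$ is regular, so $\Spec R$ is a regular scheme; since $\pi_R$ is smooth, each local homomorphism $\cO_{\Spec R, \pi_R(x)} \to \cO_{X_R, x}$ is flat with regular source and regular closed fibre (the latter being the local ring of the smooth fibre $X_{\pi_R(x)}$ at $x$), whence $\cO_{X_R, x}$ is regular. As this holds for every $x \in X_R$, the scheme $X_R$ is regular.

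The step I expect to be the main obstacle is the propagation of smoothness from the single fibre $X_0$ to the whole base: smoothness of the closed fibre gives no a priori control over the fibres at the generic points of $\Spec R$. The decisive mechanism is that properness of $\pi_R$ turns the open smooth locus into a statement about a \emph{closed} image inside the local scheme $\Spec R$, where avoiding the closed point forces emptiness; the flatness criterion for fiberwise smoothness and the closedness of the image under the proper map $\pi_R$ are precisely what bridge the gap.
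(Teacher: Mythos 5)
Your proposal is correct, but it runs in the opposite order from the paper's proof and uses different key lemmas. The paper proves regularity first: since $\Spec R$ is local, every closed point of $X_R$ lies in the closed fiber $X_0$, so the ascent result \cite[Tag 031E]{SP} (flat local homomorphism with regular base and regular closed fiber) gives regularity of $\cO_{X_R,x}$ at all closed points, hence of $X_R$; smoothness is then obtained by adapting \cite[Proposition III10.4]{Har} --- surjectivity of the tangent map $T_x X_R \to T_{\pi_R(x)} R$ yields $\dim_{\bfk(x)} \left( \Omega_{X_R/R} \otimes \bfk(x) \right) = \dim X_0$ at every closed point, the same count holds at the generic point, and \cite[Lemma II8.9]{Har} then shows $\Omega_{X_R/R}$ is locally free of rank $\dim X_0$. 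You instead prove smoothness first, via the fibrewise criterion along $X_0$ and the properness-over-a-local-base trick (the closed image of the non-smooth locus avoids $\frakm_R$, hence is empty), and deduce regularity from smoothness over the regular base $R \cong \bfk \llbracket t_1, \ldots, t_d \rrbracket$; note that both routes need regularity of $R$, and both exploit locality of $\Spec R$, yours through ``every nonempty closed subset contains the closed point,'' the paper's through ``every closed point of $X_R$ lies in $X_0$.'' Your route is arguably cleaner: it avoids the tangent-space computation and gives regularity at all points at once, whereas the paper tacitly uses that regularity may be checked at closed points. What the paper's route buys is the relative dimension for free, as the rank of the locally free sheaf $\Omega_{X_R/R}$. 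Your final step has one small imprecision there: constancy of the total fiber dimension $\dim X_s$ from \cite[Tag 0D4J]{SP} does not by itself make the fibers equidimensional, which is what ``smooth of relative dimension $\dim X_0$'' asserts. The repair is one line: once $\pi_R$ is smooth, $\Omega_{X_R/R}$ is locally free with locally constant rank, and $X_R$ is connected (any clopen piece has closed image in $\Spec R$, hence contains $\frakm_R$, hence meets the connected fiber $X_0$), so the rank equals $\dim X_0$ everywhere.
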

\begin{proof}
%$\pi_R$ is proper.
We adapt the proof of \cite[Proposition III10.4]{Har}.
Since the base scheme $\Spec R$ has the only one closed point,
every closed point $x \in X_R$ belongs to the closed fiber $X_0$.
By \cite[Tag 031E]{SP} the local ring $\cO_{X_R, x}$ is regular,
%The morphism $\pi_R$ is flat,
%and
%the schemes $X_0$ and $\Spec R$ are regular.
so $X_R$ is regular.
Note that $X_R$ is irreducible.
From the proof of \cite[Tag 031E]{SP} one sees that $\pi_R$ induces the injection
$\frakm_R / \frakm^2_R \to \frakm_{\cO_{X_R, x}} / \frakm^2_{\cO_{X_R, x}}$,
which is dual to the surjection
\begin{align*}
T_{\pi_R} \colon T_x X_R \to T_{\pi_R (x)} R
\end{align*} 
of Zariski tangent spaces.
%Smoothness of $\pi_R$ follows from the surjectivity of $T_{\pi_R}$.
It follows that
\begin{align*}
\dim_{\bfk (x)} \left( \Omega_{X_R / R} \otimes \bfk (x) \right) = \dim X_0
\end{align*}
for every closed point $x \in X_R$.
Since $\pi_R$ is flat and of finite type, we also have
\begin{align*}
\dim_{\bfk (\zeta)} \left( \Omega_{X_R / R} \otimes \bfk (\zeta) \right) = \dim X_0
\end{align*}
for the generic point $\zeta$ of $X_R$ \cite[Theorem I4.8A, Theorem II8.6A]{Har}. 
Then by \cite[Lemma II8.9]{Har} the coherent sheaf $\Omega_{X_R / R}$ is locally free of rank $\dim X_0$.
\end{proof}

%Rmk
\begin{rmk}
From the proof of \pref{lem:SFP},
one sees that
the scheme $X_S$ must be irreducible,
otherwise $X_0$ becomes disconnected.
\end{rmk}
%%%%%%%%%%%%%%%%%%%%%%%%%%%%%%%%%%%%%%%%%%%%%%%%%%%%%%%%%%
\section{Deformations of Fourier--Mukai kernels}  
In order to define a relative integral functor from $D^b (X_S)$ to $D^b (X^\prime_S)$,
we deform the Fourier--Mukai kernel $\cP_0$  to a perfect complex $\cP_S$
on the fiber product $X_S \times_S X^\prime_S$ of smooth projective versal deformations. 
Although applying the main theorem in \cite{Lie} might suffice,
we adopt a more concrete approach based on \cite{HMS09, HT10},
still using some results from \cite{Lie}.
%This allows us to avoid,
%if not indispensable,
%shrinking of the base scheme $S$. 
%However, in the next section,
%we need to shrink $S$ to show that $\Phi_{\cP_S}$ defines an equivalence. 
Here,
for a deformation $X_B$ of a $\bfk$-scheme $X$
over an augmented noetherian $\bfk$-algebra $(B, \frakm_B)$,
by a deformation of a perfect complex $E$ on $X$ over $(B, \frakm_B)$
we mean a pair $(E_B, u_B)$,
where $E_B \in D^b(X_B)$
and
$u_B \colon E_B \otimes^L_B \bfk \to E$ is an isomorphism.
%Two deformations $(E_B, u_B)$ and $(F_B, v_B)$ are said to be equivalent
%if there is an isomorphism $E_B \to F_B$ which restricts to an isomorphism of $E$.

%subsection
\subsection{Derived equivalence and relative Hochschild cohomology}
Let $X_n$ and $X^\prime_n$ be derived-equivalent schemes
smooth and projective over $R_n$.
The schemes $X_n$, $X^\prime_n$, and their fiber product $X_n \times_{R_n} X^\prime_n$ over $R_n$ form the following diagram
%diagram
\begin{align*}
\begin{gathered}
\xymatrix{
&
X_n \times_{R_n} X^\prime_n
\ar _{ q_n }[dl]
\ar ^{ p_n }[dr]
&\\
X_{R_n}
&
&
X^\prime_{R_n}}
\end{gathered}
\end{align*}
with the natural projections $q_n$ and $p_n$.
%They are flat, projective, and smooth of relative dimension $3$.
For any perfect complex $\cP_n$ on $X_n \times_{R_n} X^\prime_n$,
the relative integral functor
\begin{align*}
\Phi_{\cP_n} \left( - \right)  = Rp_{n *} \left( \cP_n \otimes^L q^*_n \left( - \right) \right)
\end{align*}
sends each object in $D^b(X_n)$ to $D^b(X^\prime_n)$.
%Moreover,
%by \cite[Lemma 1.8]{LST}
%the functor sends each perfect complex on $X_n$ to some perfect complex on $X^\prime_n$ .
Due to the Grothendieck--Verdier duality
the functor $\Phi_{\cP_n}$ has the right adjoint,
which we denote by
$\Phi_{(\cP_n)_R}$.

Assume that $\Phi_{\cP_n}$ is an equivalence.
Then two functors
\begin{align*}
\begin{gathered}
\Psi_1 \colon D^b(X_n \times_{R_n} X_n) \to D^b(X_n \times_{R_n} X^\prime_n) \\
\cG \mapsto \cP_n * \cG, 
\end{gathered}
\end{align*}
\begin{align*}
\begin{gathered}
\Psi_2 \colon D^b(X_n \times_{R_n} X^\prime_n) \to D^b(X^\prime_n \times_{R_n} X^\prime_n) \\
\cG^\prime \mapsto \cG^\prime * (\cP_n)_R 
\end{gathered}
\end{align*} 
respectively induce isomorphisms
\begin{align*}
\begin{gathered}
\psi_1 \colon \Ext_{X_n \times_{R_n} X_n} (-, -) \to \Ext_{X_n \times_{R_n} X^\prime_n} \left(\Psi_1 \left( - \right), \Psi_1 \left( - \right) \right),
\end{gathered}
\end{align*}
\begin{align*}
\begin{gathered}
\psi_2 \colon \Ext_{X_n \times_{R_n} X^\prime_n} \left( - , - \right) \to \Ext_{X^\prime_n \times_{R_n} X^\prime_n} \left(\Psi_2 \left( - \right), \Psi_2 \left( - \right) \right).
\end{gathered}
\end{align*} 
The composition defines the isomorphism
\begin{align*}
\Phi^{\text{HH}^*}_{\cP_n} = \psi_2 \circ \psi_1\colon \text{HH}^
* (X_n / R_n) \to \text{HH}^* (X^\prime_n / R_n)
\end{align*}
of the relative Hochschild cohomology complex \cite{Cal},
which gives rise to the isomorphism
\begin{align*}
\Phi^{\text{HT}^*}_{\cP_n} = ( \text{I}_{X^\prime_n}^\text{HKR} )^{-1} \circ \Phi^{\text{HH}^*}_{\cP_n} \circ \text{I}_{{X_n}}^\text{HKR} \colon \text{HT}^* (X_n / R_n) \to \text{HT}^* (X^\prime_n / R_n),
\end{align*}
where 
\begin{align*}
\text{I}_{X_n}^\text{HKR} \colon \text{HT}^* (X_n / R_n) \to \text{HH}^* (X_n / R_n),
\end{align*}
\begin{align*}
\text{I}_{X^\prime_n}^\text{HKR} \colon \text{HT}^* (X^\prime_n / R_n) \to \text{HH}^* (X^\prime_n / R_n)
\end{align*}
are the relative Hochschild--Kostant--Rosenberg isomorphisms.
Namely, we have the following commutative diagram
\begin{align} \label{eq:CD0}
\begin{gathered}
\xymatrix{
\text{HT}^*(X_n/R_n) \ar^{\text{I}_{X_n}^\text{HKR}}[r] \ar_{\Phi^{\text{HT}^*}_{\cP_n}}[d]
& \Ext^*_{X_n \times_{R_n} X_n}(\cO_{\Delta_n}, \cO_{\Delta_n}) \ar^{\Phi^{\text{HH}^*}_{\cP_n} = \psi_2 \circ \psi_1}[d] \\
\text{HT}^* (X^\prime_n / R_n) 
& \Ext^*_{X^\prime_n \times_{R_n} X^\prime_n}(\cO_{\Delta^\prime_n}, \cO_{\Delta^\prime_n}). \ar_{\left( \text{I}_{X^\prime_n}^\text{HKR} \right)^{-1}}[l]
}
\end{gathered}
\end{align}
%subsection
\subsection{Relative Atiyah class and HKR isomorphism}
For a perfect complex $E_n$ on $X_n$
the relative Atiyah class is the element
\begin{align*}
A(E_n) \in \Ext^1_{X_n} (E_n, E_n \otimes \Omega_{\pi_n})
\end{align*}
induced by the boundary morphism of the short exact sequence 
\begin{align*}
0
\to I_n / I^2_n
\to \cO_{X_n \times_{R_n} X_n } / I_n
\to \cO_{\Delta_n}
\to 0,
\end{align*}
where $I_n$ is the defining ideal sheaf of the relative diagonal. 
Composition in $D^b(X_n)$ and exterior product
$\Omega^{\otimes i}_{\pi_n} \to \Omega^i_{\pi_n}$
yield the  exponential
\begin{align*}
\exp \left( A(E_n) \right) \in \bigoplus \Ext^i_{X_n} (E_n, E_n \otimes \Omega^i_{\pi_n}).
\end{align*}

Consider the automorphism
$\tau \colon X_n \times_{R_n} X_n \to X_n \times_{R_n} X_n$
interchanging the two factors. 
Since the conormal bundle $I_n / I^2_n$
consists of elements of the form
$x \otimes_{R_n} 1 - 1 \otimes_{R_n} x$,
the pullback $\tau^*$ acts on
$\text{H}^p (X_n, \wedge^q \cT_{\pi_n})$
by $(-1)^q$.
Then as a straightforward generalization of \cite[Lemma 5.8]{Tod},
one obtains two commutative diagrams
\begin{align} \label{eq:CD1}
\begin{gathered}
\xymatrix{
\text{HT}^*(X_n/R_n) \ar^{\tau^* \circ \text{I}_{X_n}^\text{HKR}}[r] \ar_{q^*_n}[d]
& \Ext^*_{X_n \times_{R_n} X_n}(\cO_{\Delta_n}, \cO_{\Delta_n}) \ar^{\psi_1}[d] \\
\text{HT}^*(X_n \times_{R_n} X^\prime_n/R_n) \ar^{\exp \left( A \left( \cP_n \right) \right) \cdot}[r]
& \Ext^*_{X_n \times_{R_n} X^\prime_n}(\cP_n, \cP_n),
}
\end{gathered}
\end{align} 
\begin{align} \label{eq:CD2}
\begin{gathered}
\xymatrix{
\text{HT}^*(X^\prime_n /R_n) \ar^{\text{I}^{\prime \text{HKR}}_{X^\prime_n}}[r] \ar_{p^*_n}[d]
& \Ext^*_{X^\prime_n \times_{R_n} X^\prime_n}(\cO_{\Delta^\prime_n}, \cO_{\Delta^\prime_n}) \ar^{\psi^{-1}_2}[d] \\
\text{HT}^*(X_n \times_{R_n} X^\prime_n/R_n) \ar^{\exp \left( A \left( \cP_n \right)\right) \cdot}[r]
& \Ext^*_{X_n \times_{R_n} X^\prime_n}(\cP_n, \cP_n).
}
\end{gathered}
\end{align}

%subsection
\subsection{Obstruction class}
There exists an obstruction for a perfect complex
$E_n$ on $X_n$
to deform to some perfect complex on $X_{n + 1}$ \cite{Low, Lie}.
By \cite[Corollary 3.4]{HT10}
it has the explicit expression as the product 
of the truncated Atiyah class of $E_n$
and
the truncated Kodaira--Spencer class of the thickenings
$X_n \hookrightarrow X_{n + 1}$
defined by a square zero ideal.
In our setting,
the deformation $X_{n+1}$ is smooth over $R_{n+1}$.
So the truncated Atiyah and Kodaira--Spencer classes coincide with the relative Atiyah and Kodaira--Spencer classes respectively.
Then the obstruction class is given by
\begin{align*}
\varpi (E_n) = (\id_{E_n} \otimes \ \kappa_n) \circ A(E_n)
\in \Ext^2_{X_n} (E_n, E_n^{\oplus l_n}),
\end{align*}
where $\kappa_n \in \Ext^1_{X_n}(\Omega_{\pi_n}, \cO_{X_n}^{\oplus l_n})$ denotes the relative Kodaira--Spencer class,
which is the extension class  
%\begin{align}
%\kappa_n \in \Ext^1(\Omega_{\pi_{R_n}}, \cO_{X_n}^{\oplus l_n})
%\end{align}
of the short exact sequence
\begin{align*}
0
\to \cO_{X_n}^{\oplus l_n}
\xrightarrow{\cdot {}^t\!( ds_1 \cdots ds_{l_n})} \Omega_{X_{n + 1}} |_{X_n}
\to \Omega_{\pi_n}
\to 0.
\end{align*}
Here
$l_n = \dim_\bfk \frakm^{n + 1}_R / \frakm^{n + 2}_R$
and
$\{ s_1, \ldots, s_{l_n} \}$ is a fixed basis of the $\bfk$-vector space
$\frakm^{n + 1}_R / \frakm^{n + 2}_R$.  

Suppose that
there exists a thickening
$X^\prime_n \hookrightarrow X^\prime_{n + 1}$
whose relative Kodaira--Spencer class is
$\kappa^\prime_n = \left( \Phi^{\text{HT}^2}_{\cP_n} \right)^{\oplus l_n} (\kappa_n)
\in \text{H}^1 (\cT_{\pi^\prime_n})^{\oplus l_n}$.
Let
$\kappa_n \boxplus \kappa^\prime_n = q^*_n \kappa_n + p^*_n \kappa^\prime_n
\in \text{H}^1 (\cT_{\pi_n \times \pi^\prime_n})^{\oplus l_n}$.
Adapting \cite[Lemma 3.7]{HMS09} to our setting in a straightforward way,
we obtain the following:

%Lem
\begin{lem} \label{lem:OBSf}
Under the above assumption
there exists a perfect complex $\cP_{n+1}$ on
$X_{n+1} \times_{R_{n+1}} X^\prime_{n+1}$
with an isomorphism
$\cP_{n+1} \otimes^L_{R_{n+1}} R_n \cong \cP_n$
such that
the integral functor
$\Phi_{\cP_{n+1}} \colon D^b(X_{n+1}) \to D^b(X^\prime_{n+1})$
is an equivalence.
\end{lem}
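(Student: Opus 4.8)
The plan is to obtain $\cP_{n+1}$ as a deformation of $\cP_n$ along the product thickening
$X_n \times_{R_n} X^\prime_n \hookrightarrow X_{n+1} \times_{R_{n+1}} X^\prime_{n+1}$,
whose relative Kodaira--Spencer class is exactly $\kappa_n \boxplus \kappa^\prime_n$. By the obstruction theory recalled above, such a deformation exists precisely when the class
\begin{align*}
\varpi(\cP_n) = \left( \id_{\cP_n} \otimes \left( \kappa_n \boxplus \kappa^\prime_n \right) \right) \circ A(\cP_n) \in \Ext^2_{X_n \times_{R_n} X^\prime_n}\left( \cP_n, \cP_n^{\oplus l_n} \right)
\end{align*}
vanishes. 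Since $\kappa_n \boxplus \kappa^\prime_n$ is concentrated in the summand $\text{H}^1(\cT_{\pi_n \times \pi^\prime_n})^{\oplus l_n}$ of $\text{HT}^2$, on which the map $\exp(A(\cP_n)) \cdot$ reduces to $A(\cP_n) \cdot$, this class is precisely the image of $\kappa_n \boxplus \kappa^\prime_n$ under the bottom horizontal maps of \eqref{eq:CD1} and \eqref{eq:CD2}. So the heart of the argument is to prove $\varpi(\cP_n) = 0$; the existence of $\cP_{n+1}$ with $\cP_{n+1} \otimes^L_{R_{n+1}} R_n \cong \cP_n$ will then follow from \cite{Low, Lie}, and the equivalence of $\Phi_{\cP_{n+1}}$ is the standard part.

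To compute $\varpi(\cP_n)$, I would split $\kappa_n \boxplus \kappa^\prime_n = q^*_n \kappa_n + p^*_n \kappa^\prime_n$ and treat the two summands with the two commutative diagrams, working componentwise in the $l_n$ copies. Applying \eqref{eq:CD1} to $q^*_n \kappa_n$ gives
\begin{align*}
\left( \id_{\cP_n} \otimes q^*_n \kappa_n \right) \circ A(\cP_n) = \psi_1 \left( \tau^* \circ \text{I}_{X_n}^\text{HKR} (\kappa_n) \right),
\end{align*}
while applying \eqref{eq:CD2} to $p^*_n \kappa^\prime_n$ gives
\begin{align*}
\left( \id_{\cP_n} \otimes p^*_n \kappa^\prime_n \right) \circ A(\cP_n) = \psi^{-1}_2 \left( \text{I}_{X^\prime_n}^\text{HKR} (\kappa^\prime_n) \right).
\end{align*}
Here the hypothesis $\kappa^\prime_n = \left( \Phi^{\text{HT}^2}_{\cP_n} \right)^{\oplus l_n}(\kappa_n)$ enters decisively: unwinding the definition $\Phi^{\text{HT}^*}_{\cP_n} = (\text{I}_{X^\prime_n}^\text{HKR})^{-1} \circ \psi_2 \circ \psi_1 \circ \text{I}_{X_n}^\text{HKR}$ yields $\text{I}_{X^\prime_n}^\text{HKR}(\kappa^\prime_n) = \psi_2 \psi_1 \, \text{I}_{X_n}^\text{HKR}(\kappa_n)$, so the second summand collapses to $\psi_1 \left( \text{I}_{X_n}^\text{HKR}(\kappa_n) \right)$.

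Adding the two contributions, $\varpi(\cP_n) = \psi_1 \left( (\tau^* + \id) \, \text{I}_{X_n}^\text{HKR}(\kappa_n) \right)$. Since $\kappa_n$ lies in $\text{H}^1(\cT_{\pi_n})^{\oplus l_n}$, i.e. the $q = 1$ summand of $\text{HT}^2(X_n / R_n)$, the interchange $\tau^*$ acts on it by $(-1)^1 = -1$, whence $(\tau^* + \id)\, \text{I}_{X_n}^\text{HKR}(\kappa_n) = 0$ and therefore $\varpi(\cP_n) = 0$. This sign cancellation is the crux of the proof and is the only place where the compatibility of the two Kodaira--Spencer classes through $\Phi^{\text{HT}^2}_{\cP_n}$ is used; I expect it, together with correctly matching the obstruction to the bottom maps of the diagrams, to be the main technical obstacle. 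Once the obstruction vanishes and $\cP_{n+1}$ is produced, I would check that $\Phi_{\cP_{n+1}}$ is an equivalence by restricting the unit and counit of the adjunction $\left( \Phi_{\cP_{n+1}}, \Phi_{(\cP_{n+1})_R} \right)$ to the central fiber $X_0 \times X^\prime_0$, where they recover the invertible unit and counit of $\Phi_{\cP_0}$; since $\cP_{n+1}$ is perfect and flat over the artinian ring $R_{n+1}$, a Nakayama argument promotes these fiberwise isomorphisms to isomorphisms over $R_{n+1}$, giving the desired equivalence.
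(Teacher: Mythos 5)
Your proposal is correct and follows essentially the same route as the paper: you show the vanishing of the obstruction class $\left( \id_{\cP_n} \otimes \left( \kappa_n \boxplus \kappa^\prime_n \right) \right) \circ A(\cP_n)$ componentwise via the commutative diagrams \eqref{eq:CD0}, \eqref{eq:CD1}, \eqref{eq:CD2}, and your cancellation $\psi_1 \left( \left( \tau^* + \id \right) \text{I}^{\text{HKR}}_{X_n} (\kappa_n) \right) = 0$, using the hypothesis $\kappa^\prime_n = ( \Phi^{\text{HT}^2}_{\cP_n} )^{\oplus l_n} (\kappa_n)$ and the sign $(-1)^q$ of $\tau^*$ on $\text{H}^p(\wedge^q \cT)$, is exactly the paper's computation written in the equivalent form $\psi^{-1}_2 \text{I}^{\prime \text{HKR}}_{X^\prime_n} \left( -\Phi^{\text{HT}^2}_{\cP_n} ( \kappa^i_n ) + \kappa^{\prime i}_n \right) = 0$. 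The only (cosmetic) divergence is the last step, where you spell out the restriction-to-central-fiber-plus-Nakayama argument over the artinian base, while the paper simply cites \cite[Proposition 1.3]{LST}, which encapsulates precisely that argument given that $\cP_0$ defines an equivalence and $\pi_n, \pi^\prime_n$ are smooth projective.
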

\begin{proof}
We show the vanishing of the obstruction class
\begin{align*}
A(\cP_n) \cdot (\kappa_n \boxplus \kappa^\prime_n)
\in \Ext^2_{X_n \times_{R_n} X^\prime_n} (\cP_n, \cP_n)^{\oplus l_n}.
\end{align*}
We write $\kappa_n, \kappa^\prime_n$ as
$\kappa_n = ( \kappa^1_n, \ldots, \kappa^{l_n}_n), 
\kappa^\prime_n = ( \kappa^{\prime 1}_n, \ldots, \kappa^{\prime l_n}_n )$
with respect to the fixed basis.
By commutativity of the diagrams \pref{eq:CD0}, \pref{eq:CD1}, and \pref{eq:CD2},
we have 
\begin{align*}
A(\cP_n) &= A(\cP_n) \cdot (\kappa^i_n \boxplus \kappa^{\prime i}_n) \\
&= \psi_1 \left( \tau^* \left( I^{\text{HKR}}_{X_n} \left( \kappa^i_n \right) \right) \right) + \psi^{-1}_2 \left( I^{\prime \text{HKR}}_{X^\prime_n} \left( \kappa^{\prime i}_n \right) \right) \\
&= \psi^{-1}_2 I^{\prime \text{HKR}}_{X^\prime_n} \left( - \left( I^{\prime \text{HKR}}_{X^\prime_n} \right) ^{-1} \left( \psi_2 \left( \psi_1 \left( I^{\text{HKR}}_{X_n} \left( \kappa^i_n \right)  \right) \right) \right)  + \kappa^{\prime i}_n \right) \\
&= \psi^{-1}_2 I^{\prime \text{HKR}}_{X^\prime_n} \left( -\Phi^{\text{HT}^2}_{\cP_n} \left( \kappa^i_n \right) + \kappa^{\prime i}_n \right)
= 0
\end{align*}
for each $i$.
Note that,
as mentioned above,
the pullback $\tau^*$ acts on
$\text{H}^1 (X_n \times_{R_n} X^\prime_n, \cT_{\pi_n \times \pi^\prime_n})$
by $-1$.
So there exists a deformation
$\cP_{n+1}$
of
$\cP_n$.
Then by \cite[Proposition 1.3]{LST} the functor $\Phi_{\cP_{n+1}}$ is an equivalence,
since $\cP_0$ defines an equivalence
and
$\pi_n, \pi^\prime_n$ are smooth projective.
\end{proof}

%Rmk
%\begin{rmk}
%By \cite[Thm 7.1]{Tod} the first order deformation $X_1$ of $X_0$ associated with
%$v \in \text{H}^1 (X_0, \cT_{X_0})$
%is derived-equivalent to the one $X^\prime_1$ of $X^\prime_0$ associated with
%$v^\prime
%=
%\Phi^{\text{HT}^2}_{\cP_0} (v) \in \text{H}^1 (X^\prime_0, \cT_{X^\prime_0})$.
%The equivalence was given by the integral functor $\Phi_{\cP_1}$ defined by the kernel
%$\cP_1 \in \perf (X_1 \times_{\bfk [t] / t^2} X^\prime_1)$,
%which is the first order deformation of $\cP_0$ along $v \boxplus v^\prime$.
%\end{rmk}

Combining \pref{lem:OBSf} and \pref{lem:DD} below,
one sees that
if the closed fibers $X_0$ and $X^\prime_0$ are higher dimensional Calabi--Yau manifolds,
then one can always deform a Fourier--Mukai kernel on
$X_0 \times X^\prime_0$   
to
some Fourier--Mukai kernel on
$X_n \times_{R_n} X^\prime_n$
for arbitrary order $n$.   

%Lem
\begin{lem} \label{lem:DD}
If the closed fiber of $X^\prime_0$ is a Calabi--Yau manifold of dimension more than two,
then there exists a thickening
$X^\prime_n \hookrightarrow X^\prime_{n + 1}$
whose relative Kodaira--Spencer class is
$\kappa^\prime_n = \left( \Phi^{\text{HT}^2}_{\cP_n} \right)^{\oplus l_n} (\kappa_n)$.
\end{lem}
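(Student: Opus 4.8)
The plan is to establish two things: that the prescribed class $\kappa^\prime_n$ genuinely lies in the space $H^1(\cT_{\pi^\prime_n})^{\oplus l_n}$ of geometric first-order directions, and that every such class is realized by a flat thickening. The first point is where the hypothesis $\dim X^\prime_0 > 2$ enters, and the second is where the unobstructedness of Calabi--Yau deformations enters.

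First I would compute the relative Hochschild--Kostant--Rosenberg decomposition in degree two. Writing $n_0 = \dim X^\prime_0$, on the closed fiber the triviality of the canonical bundle gives $\wedge^2 \cT_{X^\prime_0} \cong \Omega^{n_0 - 2}_{X^\prime_0}$, whence by Hodge symmetry $H^0(X^\prime_0, \wedge^2 \cT_{X^\prime_0}) \cong H^{n_0 - 2}(X^\prime_0, \cO_{X^\prime_0})$; since $X^\prime_0$ is a strict Calabi--Yau manifold with $n_0 > 2$, both this group and $H^2(X^\prime_0, \cO_{X^\prime_0})$ vanish. By d\'evissage over the Artinian local ring $R_n$, using flatness of $\wedge^q \cT_{\pi^\prime_n}$, these vanishings propagate to the relative cohomology, so that
\begin{align*}
\text{HT}^2(X_n / R_n) = H^1(\cT_{\pi_n}), \qquad \text{HT}^2(X^\prime_n / R_n) = H^1(\cT_{\pi^\prime_n}).
\end{align*}
The same computation applies on the $X_n$ side because $X_0$ is likewise Calabi--Yau of dimension more than two. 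Consequently $\Phi^{\text{HT}^2}_{\cP_n}$ restricts to an isomorphism $H^1(\cT_{\pi_n}) \to H^1(\cT_{\pi^\prime_n})$, and it carries $\kappa_n \in H^1(\cT_{\pi_n})^{\oplus l_n}$ to a class $\kappa^\prime_n$ lying in $H^1(\cT_{\pi^\prime_n})^{\oplus l_n}$, as the statement asserts.

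Next I would appeal to the deformation theory of the square-zero extension $R_{n+1} \to R_n$ with kernel $J = \frakm^{n+1}_R / \frakm^{n+2}_R$. Flat liftings of $X^\prime_n$ over $R_{n+1}$ restricting to $X^\prime_n$ form, when nonempty, a torsor under $H^1(X^\prime_n, \cT_{\pi^\prime_n} \otimes_{R_n} J)$, with obstruction in $H^2(X^\prime_n, \cT_{\pi^\prime_n} \otimes_{R_n} J)$; since $J$ is annihilated by $\frakm_R$, the coefficient sheaf is $\cT_{X^\prime_0}^{\oplus l_n}$ and these groups are $H^i(X^\prime_0, \cT_{X^\prime_0})^{\oplus l_n}$. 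By the Bogomolov--Tian--Todorov theorem the deformation functor of the Calabi--Yau manifold $X^\prime_0$ is smooth, so the obstruction vanishes and a lifting exists; being a flat lifting of a smooth morphism over an Artinian base, it is automatically smooth by the fiberwise criterion, and thus yields a thickening $X^\prime_n \hookrightarrow X^\prime_{n+1}$ smooth over $R_{n+1}$. Finally, the relative Kodaira--Spencer class of a lifting is precisely its coordinate as a torsor element, so the assignment sending a thickening to its Kodaira--Spencer class is a bijection onto $H^1(\cT_{\pi^\prime_n})^{\oplus l_n}$; choosing the lifting corresponding to $\kappa^\prime_n$ produces the required thickening.

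The hard part will be the bookkeeping in the last paragraph: identifying the relative Kodaira--Spencer class of a flat lifting with its torsor coordinate, and matching the coefficient group $H^1(X^\prime_n, \cT_{\pi^\prime_n} \otimes_{R_n} J)$ with the space $H^1(\cT_{\pi^\prime_n})^{\oplus l_n}$ appearing in the statement through the restriction $H^1(X^\prime_n, \cT_{\pi^\prime_n}) \to H^1(X^\prime_0, \cT_{X^\prime_0})$. Once this identification is pinned down, surjectivity of the Kodaira--Spencer map is immediate from the torsor structure together with the unobstructedness supplied by the Calabi--Yau hypothesis, and the lemma follows.
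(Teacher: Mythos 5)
Your first step agrees with the paper's proof in substance. The paper also reduces the lemma to the two vanishings $H^0(\wedge^2 \cT_{\pi^\prime_n}) = 0$ and $H^2(\cO_{X^\prime_n}) = 0$, so that $\text{HT}^2(X^\prime_n/R_n) = H^1(\cT_{\pi^\prime_n})$ and $\kappa^\prime_n$ is a genuine tangent-direction class; it propagates the closed-fiber vanishing by semicontinuity (\cite[Theorem III12.8]{Har}) together with the fact that $\Spec R_n$ has a unique point, whereas you use d\'evissage along the $\frakm$-adic filtration --- equivalent arguments, and yours at least makes explicit the closed-fiber computation ($\wedge^2 \cT_{X^\prime_0} \cong \Omega^{n_0-2}_{X^\prime_0}$, Hodge symmetry, strictness) that the paper leaves implicit. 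Your second step, however, genuinely diverges: the paper never invokes Bogomolov--Tian--Todorov here. It represents $\kappa^\prime_n$ by a \v{C}ech $1$-cocycle $\{\theta_{ij}\}$ on an affine cover and glues trivial local liftings twisted by the automorphisms $\id + \theta_{ij}$ (\cite[Proposition 3.6, Exercise 5.2]{Har10}); since the twists take values in a square-zero ideal killed by $\frakm$, they compose additively, the cocycle condition holds on the nose, and existence is free --- no unobstructedness input --- while the Kodaira--Spencer class of the glued thickening is the prescribed one by construction. Your route, by contrast, needs BTT essentially: the obstruction group $H^2(X^\prime_0, \cT_{X^\prime_0})^{\oplus l_n} \cong H^2(X^\prime_0, \Omega^{n_0-1}_{X^\prime_0})^{\oplus l_n}$ does not vanish for Calabi--Yau threefolds.

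The step you defer as ``bookkeeping'' is where your argument actually breaks. The torsor group is $H^1(X^\prime_n, \cT_{\pi^\prime_n} \otimes_{R_n} J) \cong H^1(X^\prime_0, \cT_{X^\prime_0})^{\oplus l_n}$, computed on the closed fiber, while you claim the Kodaira--Spencer assignment is a bijection onto $H^1(X^\prime_n, \cT_{\pi^\prime_n})^{\oplus l_n}$. These groups differ for $n \geq 1$: using $H^0(\cT_{X^\prime_0}) = 0$ and flatness one gets exact sequences
\begin{align*}
0 \to \frakm^{k}_{R}/\frakm^{k+1}_{R} \otimes_\bfk H^1(\cT_{X^\prime_0}) \to H^1\bigl(\cT_{\pi^\prime_n} \otimes_{R_n} R/\frakm^{k+1}_{R}\bigr) \to H^1\bigl(\cT_{\pi^\prime_n} \otimes_{R_n} R/\frakm^{k}_{R}\bigr),
\end{align*}
so the restriction map $H^1(\cT_{\pi^\prime_n}) \to H^1(\cT_{X^\prime_0})$ through which you propose to match the groups has kernel of size roughly $\frakm_{R_n} \cdot H^1(\cT_{\pi^\prime_n})$ and is not injective whenever $X^\prime_0$ is non-rigid; there is also no distinguished basepoint of the torsor (the surjection $R_{n+1} \to R_n$ admits no ring section), so ``torsor coordinate'' is undefined without choices. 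Consequently the torsor structure can prescribe the Kodaira--Spencer class only modulo $\frakm_{R_n}$: two classes $\kappa^\prime_n \neq \kappa^{\prime\prime}_n$ with equal closed-fiber reduction yield isomorphic thickenings, and ``choose the lifting corresponding to $\kappa^\prime_n$'' is not meaningful in $H^1(\cT_{\pi^\prime_n})^{\oplus l_n}$. To pin down $\kappa^\prime_n$ itself --- which is what the obstruction computation in the preceding lemma consumes --- one must work at cocycle level as the paper does, building the thickening from a chosen representative of $\kappa^\prime_n$ so that the identification holds by construction. As written, your proposal proves the lemma only with $\kappa^\prime_n$ replaced by its closed-fiber reduction.
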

\begin{proof}
%We check that $\kappa^\prime_n \in \text{H}^1(\cT_{\pi^\prime_n})$.
First, we show the vanishing of cohomology
$H^0 (\wedge^2 \cT_{\pi^\prime_n})$
and
$H^2 (\cO_{X^\prime_n})$.
Since $\pi^\prime_n$ is a projective morphism of noetherian schemes
and
the sheaves
$\wedge^2 \cT_{\pi^\prime_n}$,
$\cO_{X^\prime_n}$
are flat over $R_n$, 
by \cite[Theorem III12.8]{Har}
there is a Zariski open neighborhood $U \subset \Spec R_n$ of the closed point
such that
$\dim_{k(y)} H^0 (\wedge^2 \cT_{\pi^\prime_n, y }) = 0$
and
$\dim_{k(y)} H^0 (\cO_{X^\prime_n, y}) = 0$
for all $y \in U$.
Then we have $U = \Spec R_n$,
as the complement does not contain the only closed point of $\Spec R_n$.

Next, we construct the thickening
$X^\prime_n \hookrightarrow X^\prime_{n + 1}$.
Fix an affine open covering $\{ U_i \}$ of $X^\prime_n$.
The element $\kappa^\prime_n \in \text{H}^1(\cT_{\pi^\prime_n})^{\oplus l_n}$
is represented by $2$-cocycles $\{ \theta_{ij} \}$ with respect to $\{ U_i \}$. 
By \cite[Propsotion 3.6, Exercise 5.2]{Har10}
these cocycles define automorphisms of the trivial deformations
$U_{ij} \times_{R_n} \Spec R_{n+1}$
that can be glued to make a global deformation
$X^\prime_{n+1}$ of $X^\prime_n$.
By definition,
the relative Kodaira--Spencer class of this thickening is $\kappa^\prime_n$.
\end{proof}

%subsection
\subsection{Algebraization}
Now, we have a system of deformations
$\cP_n \in \perf (X_{R_n} \times_{R_n} X^\prime_{R_n})$ of $\cP_0$
with compatible isomorphisms
$\cP_{n + 1} \otimes^L_{R_{n + 1}} R_n \to \cP_n$. 
By \cite[Proposition 3.6.1]{Lie}
there exists an effectivization,
i.e.,
a perfect complex $\cP_R$ on $X_R \times_R X^\prime_R$
with compatible isomorphisms
$\cP_R \otimes^L_R R_n \to \cP_n$. 
%The scheme $X_R \times_R X^\prime_R$ is determined by the sequence of closed embeddings
%\begin{align}
%X_0 \times X^\prime_0
%\hookrightarrow
%X_1 \times_{R_1} X^\prime_1
%\hookrightarrow
%X_2 \times_{R_2} X^\prime_2
%\hookrightarrow
%\cdots
%\end{align}
%defined by ideals
%$\frakm^n \cO_{X_n \times_{R_n} X^\prime_n}
%\subset
%\cO_{X_n \times_{R_n} X^\prime_n}$
%of square zero.  
Recall that
in Section $2$
to algebrize $X_R$
we have used a filtered inductive system $\{ R_i \}_{i \in I}$
of finitely generated $\cO_T (T)$-subalgebras of $R$
whose colimit is $R$.
%$\displaystyle \lim_{\longrightarrow} R_i = R$
%$\widetilde{\cO_{T,t}}$ is noetherian
For a sufficiently large index $i$,
there are deformations $X_{R_i}, X^\prime_{R_i}$ of $X_0, X^\prime_0$ over $R_i$
whose pullback along the canonical homomorphism $R_i \to R$ are $X_R, X^\prime_R$.
So we have
\begin{align*}
X_R \times_R X^\prime_R \cong \left( X_{R_i} \times_{R_i} X^\prime_{R_i} \right) \times_{R_i} R.
\end{align*}
%$R_i$ is a (finitely generated) $\cO_T (T)$-subalgebra of $R$.
By \cite[Proposition 2.2.1]{Lie}
there exists a perfect complex
$\cP_{R_i}$ on $X_{R_i} \times_{R_i} X^\prime_{R_i}$
with an isomorphism
$\cP_{R_i} \otimes^L_{R_i} R \to \cP_R$.
Then the derived pullback
$\cP_S \in \perf (X_S \times_S X^\prime_S)$
along $R_i \to \cO_S (S)$
is a deformation of $\cP_0$.
Finally,
we obtain the following:

%Prop
\begin{prop}
Let $\cP_0$ be a Fourier--Mukai kernel 
defining the derived equivalence of Calabi--Yau manifolds $X_0$ and $ X^\prime_0$ of dimension more than two.
Then there exists a perfect complex $\cP_S$ on the fiber product
$X_S \times_S X^\prime_S$
of smooth projective versal deformations
with an isomorphism
$\cP_S \otimes^L_{\cO_S(S)} \bfk \to \cP_0$.
\end{prop}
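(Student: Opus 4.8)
The plan is to produce $\cP_S$ in three stages: a formal, order-by-order deformation of $\cP_0$ over the artinian quotients $R_n = R / \frakm^{n+1}_R$; an effectivization of this formal object to a perfect complex $\cP_R$ on $X_R \times_R X^\prime_R$; and an algebraization obtained by descending $\cP_R$ to a finitely generated subalgebra of $R$ and pulling it back along the homomorphism that built the versal family $X_S$.

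First I would construct a compatible system $\{ \cP_n \}_{n \in \bN}$ of deformations of $\cP_0$ by induction on $n$, with $\cP_n \in \perf ( X_{R_n} \times_{R_n} X^\prime_{R_n} )$ together with isomorphisms $\cP_{n+1} \otimes^L_{R_{n+1}} R_n \cong \cP_n$. The base case is $\cP_0$, which defines the derived equivalence of $X_0$ and $X^\prime_0$. For the inductive step, suppose $\cP_n$ has been built so that $\Phi_{\cP_n} \colon D^b (X_n) \to D^b (X^\prime_n)$ is an equivalence; then $\Phi^{\text{HT}^2}_{\cP_n}$ is defined, and \pref{lem:DD} provides the thickening $X^\prime_n \hookrightarrow X^\prime_{n+1}$ whose relative Kodaira--Spencer class equals $( \Phi^{\text{HT}^2}_{\cP_n} )^{\oplus l_n} (\kappa_n)$. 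This is exactly the hypothesis of \pref{lem:OBSf}, which then yields $\cP_{n+1}$ on $X_{n+1} \times_{R_{n+1}} X^\prime_{n+1}$ with $\cP_{n+1} \otimes^L_{R_{n+1}} R_n \cong \cP_n$ and $\Phi_{\cP_{n+1}}$ again an equivalence, closing the induction. The Calabi--Yau hypothesis on $X_0$ and $X^\prime_0$, through \pref{lem:DD}, is what keeps the construction running at every order.

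Next I would effectivize and algebraize. By \cite[Proposition 3.6.1]{Lie} the compatible system $\{ \cP_n \}$ effectivizes to a perfect complex $\cP_R$ on $X_R \times_R X^\prime_R$ with compatible isomorphisms $\cP_R \otimes^L_R R_n \to \cP_n$. Using the filtered inductive system $\{ R_i \}_{i \in I}$ of finitely generated $\cO_T (T)$-subalgebras of $R$ from Section $2$, for $i$ large enough one has $X_R \times_R X^\prime_R \cong ( X_{R_i} \times_{R_i} X^\prime_{R_i} ) \times_{R_i} R$, so \cite[Proposition 2.2.1]{Lie} descends $\cP_R$ to a perfect complex $\cP_{R_i}$ on $X_{R_i} \times_{R_i} X^\prime_{R_i}$ with $\cP_{R_i} \otimes^L_{R_i} R \to \cP_R$. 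Taking the derived pullback along the homomorphism $\varphi \colon R_i \to \cO_S (S)$ constructed in \pref{thm:Alg} then gives a perfect complex $\cP_S \in \perf ( X_S \times_S X^\prime_S )$.

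The remaining point, which I expect to demand the most care, is the identification $\cP_S \otimes^L_{\cO_S (S)} \bfk \cong \cP_0$ over the closed fiber $X_0 \times X^\prime_0$. Since $\cP_S = \cP_{R_i} \otimes^L_{R_i} \cO_S (S)$, this restriction equals $\cP_{R_i} \otimes^L_{R_i} \bfk$ for the composite $R_i \xrightarrow{\varphi} \cO_S (S) \to \bfk$ evaluating at the distinguished point $s$. I would argue that $\varphi$ is a morphism of augmented $\bfk$-algebras, so this composite agrees with the augmentation $R_i \to \bfk$; then $\cP_{R_i} \otimes^L_{R_i} \bfk \cong \cP_{R_i} \otimes^L_{R_i} R \otimes^L_R \bfk \cong \cP_R \otimes^L_R R_0 \cong \cP_0$. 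The conceptual input is just that $\varphi$ respects augmentations, which holds because $\varphi$ corresponds to the solution congruent to the universal one modulo $\frakm^2_R$; the genuine labour is tracking the several derived base changes and their closed-fiber restrictions coherently, since everything else is an assembly of \pref{lem:OBSf}, \pref{lem:DD}, and the two cited results of Lieblich.
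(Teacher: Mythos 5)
Your proposal is correct and follows essentially the same route as the paper: the inductive construction of the compatible system $\{ \cP_n \}$ via \pref{lem:DD} and \pref{lem:OBSf}, effectivization to $\cP_R$ by \cite[Proposition 3.6.1]{Lie}, descent to $\cP_{R_i}$ by \cite[Proposition 2.2.1]{Lie}, and derived pullback along the homomorphism $\varphi \colon R_i \to \cO_S (S)$ from \pref{thm:Alg}. Your final check that $\varphi$ respects augmentations (so that the closed-fiber restriction really is $\cP_0$) is a point the paper leaves implicit, and your verification of it is sound; note only that, as the paper's remark after the proposition observes, one must algebrize $X_R$ and $X^\prime_R$ simultaneously over a common $R_i$, which your choice of a single sufficiently large index already accommodates.
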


%Rmk
\begin{rmk}
For fixed universal formal families $(R, \xi)$ and  $(R, \xi^\prime)$ of $X_0$ and $X^\prime_0$,
the deformations $X^\prime_n$ 
associated with the images
$\kappa^\prime_n = \left( \Phi^{\text{HT}^2}_{\cP_n} \right)^{\oplus l_n} (\kappa_n)$
determine another universal formal family $(R, \tilde{\xi}^\prime)$ of $X^\prime_0$.
Since two universal formal families $(R, \xi^\prime)$ and $(R, \tilde{\xi}^\prime)$ are isomorphic up to unique isomorphism,
by the construction of our versal deformations
we may algebrize $X_R$ and $X^\prime_R$ simultaneously.
\end{rmk}
%%%%%%%%%%%%%%%%%%%%%%%%%%%%%%%%%%%%%%%%%%%%%%%%%%%%%%%%%%
\section{Proof of the main theorem}  
%In the previous section,
%we have deformed the Fourier--Mukai kernel $\cP_0$
%to a perfect complex $\cP_S$ on the product
%$X_S \times_S X^\prime_S$
%of smooth projective versal deformations.
The smooth projective versal deformations $X_S$, $X^\prime_S$, and their fiber product $X_S \times_S X^\prime_S$ over $S$ form the following diagram
%diagram
\begin{align*} 
\begin{gathered}
\xymatrix{
&
X_S \times_S X^\prime_S
\ar _{ q }[dl]
\ar ^{ p}[dr]
&\\
X_S
&
&
X^\prime_S}
\end{gathered}
\end{align*}
with the natural projections $q$ and $p$.
%They are projective, and smooth of relative dimension $3$.
The relative integral functor
\begin{align*}
\Phi_{\cP_S} \left( - \right)  = Rp_* \left( \cP_S \otimes^L q^* \left( - \right) \right)
\end{align*}
sends each object in $D^b (X_S)$ to $D^b (X^\prime_S)$,
where
$\cP_S \in \perf (X_S \times_S X^\prime_S)$
is a deformation of $\cP_0$ over $\left( \cO_S (S), \frakm_{\cO_S (S)} \right)$.
In this section,
after possible shrinking of the base scheme $S$,
we show that the functor $\Phi_{\cP_S}$ is an equivalence. 
One can also show similar results for formal deformations and their effectivizations.

 %Thm 
\begin{thm} \label{thm:VDDeq}
Let $X_0$ and $X_0^\prime$ be derived-equivalent Calabi--Yau manifolds
of dimension more than two.
Then there exists a nonsingular affine variety $S$ over $\bfk$
such that
general fibers of smooth projective versal deformations $X_S$ and $X^\prime_S$ over $S$ %of $X_0$ and $X^\prime_0$ 
are derived-equivalent.
In particular,
after possible shrinking of the base scheme $S$,
the schemes $X_S$ and $X^\prime_S$ are derived-equivalent.
\end{thm}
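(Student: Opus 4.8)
The plan is to assemble the constructions of the previous two sections into the relative integral functor $\Phi_{\cP_S}$ and then to propagate the equivalence from the distinguished fiber to a Zariski neighborhood by an openness argument on the base.

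First I would invoke \pref{lem:SFP} to fix a nonsingular affine variety $S$ with distinguished closed point $s_0$ over which $X_S$ and $X^\prime_S$ are smooth projective versal deformations of $X_0$ and $X^\prime_0$; by the remark closing Section $3$ these may be algebraized simultaneously over a common base. The proposition of Section $3$ then supplies a perfect complex $\cP_S \in \perf(X_S \times_S X^\prime_S)$ with $\cP_S \otimes^L_{\cO_S (S)} \bfk \cong \cP_0$, and hence the relative integral functor $\Phi_{\cP_S} \colon D^b(X_S) \to D^b(X^\prime_S)$. For $s \in S$ I write $\cP_s$ for the derived restriction $L\iota_s^* \cP_S$ to the fiber $X_s \times X^\prime_s$; since $q$ and $p$ are flat and proper and $\cP_S$ is perfect, base change identifies the restriction of $\Phi_{\cP_S}$ to the fiber over $s$ with the absolute integral functor $\Phi_{\cP_s} \colon D^b(X_s) \to D^b(X^\prime_s)$. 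At $s_0$ we have $\cP_{s_0} \cong \cP_0$, so $\Phi_{\cP_{s_0}}$ is an equivalence by hypothesis; moreover, after shrinking $S$ each fiber is again a Calabi--Yau manifold of dimension $\dim X_0$, since the numbers $\dim_\bfk \text{H}^i(X_s, \cO_{X_s})$ are deformation invariant in characteristic $0$ and $\omega_{\pi_S}$ is fiberwise trivial near $s_0$ because $\text{H}^1(X_0, \cO_{X_0}) = 0$.

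Next I would make precise the openness of the equivalence locus. By Grothendieck--Verdier duality, already used in Section $3$, the functor $\Phi_{\cP_S}$ admits a relative right adjoint, again a relative integral functor with perfect kernel, and the cones of the unit and the counit are relative integral functors whose kernels $\mathcal{K} \in \perf(X_S \times_S X_S)$ and $\mathcal{K}^\prime \in \perf(X^\prime_S \times_S X^\prime_S)$ are obtained by convolution. The functor $\Phi_{\cP_s}$ is an equivalence precisely when $L\iota_s^* \mathcal{K}$ and $L\iota_s^* \mathcal{K}^\prime$ vanish. Because $\supp \mathcal{K}$ and $\supp \mathcal{K}^\prime$ are closed and proper over $S$, the set
\begin{align*}
V = \{ s \in S \mid \Phi_{\cP_s} \text{ is an equivalence} \}
\end{align*}
is the complement of the closed image in $S$ of these supports, hence open; and $s_0 \in V$ by the equivalence at the central fiber. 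As $S$ is irreducible, $V$ is dense, which already yields that general fibers of $X_S$ and $X^\prime_S$ are derived-equivalent.

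Finally I would replace $S$ by $V$, which is legitimate because an open immersion is \'etale and $X_V, X^\prime_V$ form another versal deformation. Over $V$ the perfect complexes $\mathcal{K}, \mathcal{K}^\prime$ restrict to $0$ on every fiber, so their supports are empty and $\mathcal{K} = \mathcal{K}^\prime = 0$; consequently the unit and the counit are isomorphisms and $\Phi_{\cP_V}$ is an equivalence, giving the derived equivalence of $X_S$ and $X^\prime_S$ after this shrinking. The step I expect to be the main obstacle is the openness of $V$: it requires the relative Fourier--Mukai formalism over the non-artinian base $S$ to be well behaved under derived restriction to fibers — in particular that the adjoint and the cone-kernels stay perfect and compatible with base change — so that the support of a perfect complex controls, uniformly in $s$, the failure of the fiberwise functor to be an equivalence.
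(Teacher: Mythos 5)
Your proposal is correct in substance and reaches both conclusions by the same global strategy as the paper --- produce $\cP_S$ from Section 3, use the Grothendieck--Verdier adjoint, and exhibit the locus of non-equivalence as the image in $S$ of a closed subset proper over $S$ that misses the distinguished point --- but the central technical device is genuinely different. You work at the level of kernels: you realize the unit and counit as morphisms of kernels, take their cones $\mathcal{K} \in \perf(X_S \times_S X_S)$ and $\mathcal{K}^\prime \in \perf(X^\prime_S \times_S X^\prime_S)$, and read off everything from their supports. The paper deliberately avoids this: it applies the counit to a single object $E \in D^b(X_S)$, takes the cone $F$, removes $\pi_S(\supp F)$ from the base, and then upgrades the vanishing of the cone from that single object to all of $D^b$ over the open set by choosing $E$ to be a strong generator --- $D^b(X_S)$ is strongly finitely generated by \cite[Theorem 3.1.4]{BB}, the functors commute with direct sums by \cite[Corollary 3.3.4]{BB}, and an induction over cones via long exact sequences does the rest; it then repeats the argument for the left adjoint to get a second open set $V$ and concludes on $U \cap V$ from the criterion that a fully faithful functor with a fully faithful adjoint is an equivalence. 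Your route buys a cleaner statement (a single open set and an exact characterization of the equivalence locus) and dispenses with strong generation; its cost is exactly the step you flag as the main obstacle: that the unit and counit of a relative Fourier--Mukai adjunction are induced by morphisms of kernels compatible with derived restriction to fibers. This is true but nontrivial --- in the absolute case it is a theorem of Anno and Logvinenko on adjunctions for Fourier--Mukai kernels, and the relative smooth projective case follows by the same convolution calculus and duality trace maps --- and it is not among the tools the paper uses, which is presumably why the paper trades it for the strong-generator induction. With such a reference supplied, your argument closes.

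Two smaller points. First, for the theorem you only need one implication of your ``precisely when'': fibers of $\mathcal{K}$ and $\mathcal{K}^\prime$ vanish, hence equivalence. The converse, which you invoke to conclude $s_0 \in V$, deserves a word of justification: if $\Phi_{\cP_0}$ is an equivalence, the restricted cone kernels induce the zero functor, and one sees they vanish by evaluating on skyscraper sheaves of points of the closed fiber, using that the support of a perfect complex is detected by derived fibers. Second, the paragraph verifying that nearby fibers are again Calabi--Yau manifolds is harmless but not needed; the statement only asserts derived equivalence of the fibers, and nothing in the openness argument uses the Calabi--Yau condition on fibers other than at $s_0$, where it entered already in the construction of $\cP_S$.
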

\begin{proof}
Due to the Grothendieck--Verdier duality
the functor $\Phi_{\cP_S}$ has the left adjoint,
which we denote by $\Phi_{(\cP_S)_L}$.
For every object $E \in D^b (X_S)$ the counit morphism
$\eta \colon \Phi_{(\cP_S)_L} \circ \Phi_{\cP_S} \to \id_{D^b (X_S)}$
gives a distinguished triangle
\begin{align} \label{eq:DT}
\Phi_{(\cP_S)_L} \circ \Phi_{\cP_S} (E) \to E \to F \coloneqq \Cone \left( \eta \left( E \right) \right).
\end{align}
%$X^\prime_S$ is regular.
We may assume that $E$ and $F$ are perfect complexes on $X_S$.
%On $X^\prime_S$ for a complex being perfect is equivalent to being strictly perfect.
Let
$ i_s \colon X_s \hookrightarrow X_S$,
$ i^\prime_s \colon X^\prime_s \hookrightarrow X^\prime_S$,
and
$ j_s = i_s \times i^\prime_s \colon X_s \times X^\prime_s \hookrightarrow X_S \times_S X^\prime_S$
be the closed immersions for every closed point $s \in S$.
By the derived flat base change we have
\begin{align*}
L{ i^\prime_s}^* \Phi_{\cP_S} \left( E \right) \cong \Phi_s \left( E_s \right),
\end{align*}
where $E_s =  L i^*_s (E)$
and
$\Phi_s = \Phi_{j^*_s \cP_S}$.
We also denote by $\left( \Phi_s \right)_L$ the left adjoint of $\Phi_s$ with kernel $\left( j^*_s \cP_S \right)_L$.
Then \pref{eq:DT} restricts to a distinguished triangle
\begin{align*}
\left( \Phi_s \right)_L \circ \Phi_s (E_s) \to E_s \to F_s.
\end{align*}
Note that the restriction of the counit morphism is the counit morphism.
Since $\Phi_{\cP_0}$ is an equivalence,
the restriction of $F$ to $X_0$ is quasi-isomorphic to $0$.
So the support $\supp (F) = \bigcup \supp \cH ^l (F)$ of the perfect complex $F$
is a proper Zariski closed subset of $X_S$.
%$F$ is a bounded complex.
%$\cH^l (F)$ are coherent sheaves on a noetherian scheme $X^\prime_S$.
Let $U \subset S$ be the complement of the image $\pi_S \left( \supp (F) \right)$.
%The morphism $\pi^\prime_S$ sends the closed fiber $X^\prime_0$ to a closed point.
Since $U$ contains the image of the closed fiber $X_0$,
it is a nonempty open subset of $S$
and
$\pi_S^{-1} (U)$ does not intersect with $\supp (F)$.
In particular,
we have $F_s \cong 0$
for every closed point $s \in U$.
If $E$ is a strong generator of $D^b(X_S)$,
this implies that $\Phi_{\cP_U}$ is fully faithful.
Here $\cP_U$ denotes the restriction of $\cP_S$ to $q^{-1} \pi^{-1}_S (U)$.
Recall that
a triangulated category is strongly finitely generated
if there exist an object $E$ and nonnegative integer $k$
such that
every object can be obtained from $E$
by taking
isomorphisms,
direct summands,
shifts,
and
not more than $k$ times cones.
Since $X_S$ is noetherian, separated, and regular,
$D^b (X_S)$ is strongly finitely generated by \cite[Theorem 3.1.4]{BB}.
Since $\Phi_{\cP_S}$ and $\Phi_{(\cP_S)_L}$ commute with direct sums on $D^b(X_S)$ by \cite[Corollary 3.3.4]{BB},
we may assume that $E$ has no nontrivial direct summands.
Using the cohomology long exact sequence induced by a distinguished triangle,
one inductively sees that 
on $\pi^{-1}_S (U)$ the cone of the counit morphism for any object is quasi-isomorphic to $0$. 
Similarly, one finds a Zariski open subset $V \subset S$
such that
$\left( \Phi_{\cP_V} \right)_L$ is fully faithful.
%$S$ is irreducible.
Finally, we obtain an equivalence $\Phi_s \colon D^b (X_s) \to D^b (X^\prime_s)$
for every closed point $s \in U \cap V \neq \emptyset$,
as a fully faithful functor which admits either fully faithful left or right adjoint is an equivalence.
In particular, %y \cite[Proposition 1.3]{LST}
$\Phi_{\cP_S}$ is an equivalence
after possible shrinking of the base scheme $S$.
\end{proof}

%Cor
\begin{cor} \label{cor:GFdeq}
Let $X_0$ and $X_0^\prime$ be derived-equivalent Calabi--Yau manifolds of dimension more than two.
Then all effectivizations $X_R$ and $X^\prime_R$ of universal formal families
$\xi$ and $\xi^\prime$
projective over $R$ are derived-equivalent.
\end{cor}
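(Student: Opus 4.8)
The plan is to rerun the proof of \pref{thm:VDDeq} essentially verbatim, but over the local base $\Spec R$ in place of $S$ and with the effectivized kernel $\cP_R \in \perf(X_R \times_R X^\prime_R)$ of Section 3 in place of $\cP_S$; recall that $\cP_R$ restricts to the given Fourier--Mukai kernel along the closed fiber, i.e. $\cP_R \otimes^L_R \bfk \cong \cP_0$, for any projective effectivizations $X_R$ and $X^\prime_R$. The decisive feature of this substitution is that $\Spec R$ is local: its unique closed point $\frakm_R$ lies in every nonempty closed subset, so any \emph{proper} closed subset of $\Spec R$ that avoids $\frakm_R$ must be empty. This is exactly what promotes the ``general fiber'' conclusion of \pref{thm:VDDeq} to a genuine equivalence over the whole of $\Spec R$.

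Concretely, I would first form the relative integral functor $\Phi_{\cP_R}\colon D^b(X_R) \to D^b(X^\prime_R)$ together with its left adjoint $\Phi_{(\cP_R)_L}$ provided by Grothendieck--Verdier duality, and pick a strong generator $E$ of $D^b(X_R)$. Such a generator exists because $X_R$ is noetherian, separated, and regular by \pref{lem:sm}, so $D^b(X_R)$ is strongly finitely generated by \cite{BB}; regularity also lets me take $E$ and the cone $F \coloneqq \Cone(\Phi_{(\cP_R)_L}\Phi_{\cP_R}(E) \to E)$ of the counit to be perfect, with closed support $\supp(F) = \bigcup \supp \cH^l(F)$. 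As in \pref{thm:VDDeq}, derived flat base change identifies the restriction of the counit along the closed fiber with the counit of the equivalence $\Phi_{\cP_0}$, so the restriction of $F$ to $X_0$ vanishes and $\supp(F)$ is disjoint from $X_0 = \pi_R^{-1}(\frakm_R)$. Since $\pi_R$ is projective, hence proper, $\pi_R(\supp(F))$ is closed in $\Spec R$ and does not contain $\frakm_R$; by locality of $\Spec R$ it is therefore empty, whence $F \cong 0$ and the counit is an isomorphism on the strong generator $E$.

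To finish I would, exactly as in \pref{thm:VDDeq}, use that $\Phi_{\cP_R}$ and $\Phi_{(\cP_R)_L}$ commute with direct sums \cite{BB} and argue by induction along the cohomology long exact sequence of the counit triangle -- passing through shifts, cones, direct summands, and direct sums built out of $E$ -- to conclude that the counit is an isomorphism on every object, i.e. $\Phi_{\cP_R}$ is fully faithful. The symmetric computation for the unit $\id_{D^b(X^\prime_R)} \to \Phi_{\cP_R}\Phi_{(\cP_R)_L}$ shows $\Phi_{(\cP_R)_L}$ is fully faithful as well, and a fully faithful functor admitting a fully faithful adjoint is an equivalence. The only real subtlety, I expect, is conceptual rather than computational: one is tempted to deduce the statement by base-changing the equivalence of \pref{thm:VDDeq} along $\Spec R \to S$, but the content of the corollary is precisely that the equivalence survives over the closed point, which a shrinking of $S$ could in principle sacrifice; the locality of $\Spec R$ is what makes the direct argument both necessary and clean, while everything else is supplied by Section 3, \pref{lem:sm}, and \cite{BB}.
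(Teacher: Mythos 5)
Your core argument is exactly the paper's: the proof of \pref{thm:VDDeq} is rerun with $S$ replaced by $\Spec R$, and since $\pi_R(\supp F)$ is a closed subset of the local scheme $\Spec R$ missing the unique closed point, it is empty, so $F \cong 0$ globally and both adjoints are fully faithful. That part of your proposal is correct and matches the paper essentially verbatim (the paper's proof of \pref{thm:VDDeq} already establishes full faithfulness from the vanishing of the cone on a strong generator via \cite{BB}, so your repetition of that mechanism over $R$ adds nothing that needs checking).

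The genuine gap is in your opening sentence: you ``recall'' that $\cP_R \otimes^L_R \bfk \cong \cP_0$ holds \emph{for any} projective effectivizations $X_R$ and $X^\prime_R$, but Section~3 does not provide this. The kernel $\cP_R$ is produced by \cite[Proposition 3.6.1]{Lie} on the fiber product of the \emph{particular} effectivizations constructed in Section~2 (with $X^\prime_R$ arising from the thickenings built in \pref{lem:DD} and the identification of $(R,\tilde{\xi}^\prime)$ with $(R,\xi^\prime)$), so your argument only proves the corollary for that one pair, whereas the statement quantifies over all effectivizations. The paper closes exactly this gap with a second step you omit: for any effectivization $\pi_R \colon X_R \to \Spec R$ projective over the complete local noetherian ring $R$, formal GAGA \cite[Corollary III5.1.6]{GD61} gives an equivalence $\coh(X_R) \to \Coh(\hat{X}_R)$ of abelian categories, hence $D^b(X_R) \simeq D^b(\hat{X}_R) = D^b(\xi)$; therefore all effectivizations of a fixed formal family are derived-equivalent, and the equivalence you constructed for one choice transfers to every other. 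You should add this step (or an equivalent uniqueness-of-effectivization argument via Grothendieck existence). A minor further quibble: your closing ``subtlety'' slightly misdiagnoses why base change from \pref{thm:VDDeq} fails --- the shrinking there never sacrifices the distinguished point $s$, since both $U$ and $V$ contain it; the real issue is that an equivalence over a Zariski neighborhood of $s$ does not formally pull back to an equivalence over $\Spec R$ without an additional descent or fiberwise argument, which is why the direct local argument is used.
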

\begin{proof}
Replace $X_S, X^\prime_S$, and $S$ by $X_R, X^\prime_R$, and $R$
in the above proof.
Then $\pi_R (\supp F)$ is a Zariski closed subset of $\Spec R$
which does not contain the only one closed point. 
This implies that $\supp F$ is empty.
%Note that $R$ is noetherian.
It remains to show that the derived equivalence does not depend on the choice of $X_R$ and $X^\prime_R$. 
%formal scheme
Given an effectivization
$\pi_R \colon X_R \to \Spec R$
of $\xi$,
we have the following pullback diagram
\begin{align*}
\begin{gathered}
\xymatrix{
\xi \cong \hat{X}_R \ar@{^{(}->}[r]^-{} \ar_{\hat{\pi}}[d]
& X_R \ar^{\pi_R}[d] \\
\Spf R \ar@{^{(}->}[r]_-{}
& \Spec R
}
\end{gathered}
\end{align*}
of noetherian formal schemes,
where $X_R$ is considered as the formal completion along itself.
Since $\pi_R$ is projective
and
$R$ is a complete local noetherian ring,
by \cite[Corollary III5.1.6]{GD61} the functor
\begin{align*} %\label{eq:Alg}
\coh (X_R) \to \Coh (\hat{X}_R),
\end{align*}
which sends each coherent sheaf $\cF$ on $X_R$ to its formal completion $\hat{\cF}$ along the closed fiber
is an equivalence of abelian categories.
So we have
\begin{align*} %\label{eq:PAlg}
D^b (X_R) \simeq D^b (\hat{X}_R).
\end{align*}
In particular, all effectivizations of $\xi$ are derived-equivalent. 
\end{proof}

%Cor
\begin{cor} \label{cor:FDdeq}
Let $X_0$ and $X_0^\prime$ be derived-equivalent Calabi--Yau manifolds of dimension more than two.
Then for any formal deformation
$\cX = \hat{X}_{\bfk \llbracket t \rrbracket}$ of $X_0$
there exists a formal deformation
$\cX ^\prime = \hat{X^\prime}_{\bfk \llbracket t \rrbracket}$ of $X^\prime_0$
which is derived-equivalent to $\cX$.
\end{cor}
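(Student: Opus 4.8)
The plan is to reduce the statement to the derived equivalence of effectivizations over a complete local base established in \pref{cor:GFdeq}, by specializing the universal family along the prescribed one-parameter deformation. First I would use pro-representability of $F_{X_0}$ to turn the datum of $\cX$ into algebra: since $\bfk\llbracket t\rrbracket$ is a complete local noetherian $\bfk$-algebra and $(R,\xi)$ is a universal formal family for $F_{X_0}$, the formal deformation $\cX$ corresponds to a unique local homomorphism $\phi\colon R\to\bfk\llbracket t\rrbracket$, and $\cX$ is recovered as the pullback $\xi\times_{\Spf R}\Spf\bfk\llbracket t\rrbracket$ of the formal family along $\Spf\phi$.

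Next I would use the same $\phi$ to produce the companion deformation of $X^\prime_0$. By the remark following the effectivization of the kernel, the effectivizations $X_R$, $X^\prime_R$ together with the perfect complex $\cP_R$ on $X_R\times_R X^\prime_R$ are defined over one and the same ring $R$. I would base change all of this along $\phi$, setting $X_{\bfk\llbracket t\rrbracket}=X_R\times_R\bfk\llbracket t\rrbracket$, $X^\prime_{\bfk\llbracket t\rrbracket}=X^\prime_R\times_R\bfk\llbracket t\rrbracket$, and $\cP_{\bfk\llbracket t\rrbracket}=L\phi^*\cP_R$. Since smoothness and projectivity are stable under base change, these are smooth projective schemes over the discrete valuation ring $\bfk\llbracket t\rrbracket$, hence noetherian, separated and regular, and $\cP_{\bfk\llbracket t\rrbracket}$ is a perfect complex restricting to $\cP_0$ on the closed fiber $X_0\times X^\prime_0$. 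I would then take $\cX^\prime=\hat{X^\prime}_{\bfk\llbracket t\rrbracket}$ to be the formal completion of $X^\prime_{\bfk\llbracket t\rrbracket}$ along $X^\prime_0$, which is a formal deformation of $X^\prime_0$.

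The core step is to show that the relative integral functor $\Phi_{\cP_{\bfk\llbracket t\rrbracket}}\colon D^b(X_{\bfk\llbracket t\rrbracket})\to D^b(X^\prime_{\bfk\llbracket t\rrbracket})$ is an equivalence, and here I would transcribe the proof of \pref{cor:GFdeq} verbatim with $R$ replaced by $\bfk\llbracket t\rrbracket$: forming the cone $F$ of the counit of the adjunction, its support is a Zariski closed subset of $X_{\bfk\llbracket t\rrbracket}$ whose image under the proper projection is closed in $\Spec\bfk\llbracket t\rrbracket$ and, because $\Phi_{\cP_0}$ is an equivalence on the closed fiber, misses the unique closed point; as $\bfk\llbracket t\rrbracket$ is local this image is empty, so $F\cong 0$ and $\Phi_{\cP_{\bfk\llbracket t\rrbracket}}$ is fully faithful, and symmetrically for the adjoint. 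Finally I would pass from schemes to formal schemes as in \pref{cor:GFdeq}: both $X_{\bfk\llbracket t\rrbracket}$ and $X^\prime_{\bfk\llbracket t\rrbracket}$ are projective over the complete local ring $\bfk\llbracket t\rrbracket$, so Grothendieck's existence theorem \cite[Corollary III5.1.6]{GD61} yields $D^b(X_{\bfk\llbracket t\rrbracket})\simeq D^b(\cX)$ and $D^b(X^\prime_{\bfk\llbracket t\rrbracket})\simeq D^b(\cX^\prime)$, and composing gives the desired $D^b(\cX)\simeq D^b(\cX^\prime)$.

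I expect the delicate point to be bookkeeping rather than new geometry: one must verify that the base change of $\xi$ along $\Spf\phi$ really reproduces the prescribed $\cX$, so that the constructed $\cX^\prime$ is matched to the given deformation and not to some other specialization, and that base change commutes with formal completion, so that the Grothendieck-existence equivalences are compatible with the relative integral functors. Since any effectivization of $\cX$ projective over $\bfk\llbracket t\rrbracket$ is essentially unique, the presentation $\cX=\hat{X}_{\bfk\llbracket t\rrbracket}$ may be identified with $\widehat{X_R\times_R\bfk\llbracket t\rrbracket}$ without loss of generality. Once these compatibilities are pinned down, the equivalence is a direct transcription of \pref{cor:GFdeq} to the valuation ring $\bfk\llbracket t\rrbracket$.
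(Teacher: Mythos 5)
Your proposal is correct, but it takes a genuinely different route from the one the paper intends. The paper's one-line proof of \pref{cor:FDdeq} delegates to ``the argument in Section 3'': viewing the given formal deformation $\cX$ as a compatible system $X_n$ over $\bfk[t]/t^{n+1}$, one re-runs the order-by-order kernel deformation along this tower --- \pref{lem:DD} produces companion thickenings $X^\prime_n \hookrightarrow X^\prime_{n+1}$ with matched Kodaira--Spencer classes $\kappa^\prime_n = \Phi^{\mathrm{HT}^2}_{\cP_n}(\kappa_n)$, \pref{lem:OBSf} shows the obstruction vanishes and deforms the kernel --- then effectivizes via Lieblich and concludes with the support argument and formal GAGA exactly as in \pref{cor:GFdeq}. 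You instead specialize the universal construction: pro-representability (available here since $H^0(\cT_{X_0})=0$ for a strict Calabi--Yau of dimension more than two) converts $\cX$ into a classifying map $\phi \colon R \to \bfk\llbracket t\rrbracket$, and you pull back the already-constructed data $X_R$, $X^\prime_R$, $\cP_R$ --- which live over a single copy of $R$ by the Remark closing Section 3 --- along $\phi$, then rerun the two-point-spectrum support argument over the DVR. Your route buys real economy: no obstruction calculus is redone for the specific one-parameter family (the matching of Kodaira--Spencer directions is inherited from the universal case), and it sidesteps the fact that \pref{lem:OBSf} and \pref{lem:DD} are stated for the quotients $R/\frakm_R^{n+1}$, so the paper's reading tacitly requires observing that their proofs use only square-zero extensions with smooth projective fibers. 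The costs are a genuine reliance on universality rather than mere versality (versality would still give existence of $\phi$ by lifting step by step along the tower, but not the canonical identification you use), on the simultaneous algebraization remark, and on the two compatibilities you correctly flag: that completion commutes with the base change, so $\widehat{X_R \times_R \bfk\llbracket t\rrbracket} \cong \cX$ reproduces the prescribed deformation, and that $L\phi^*\cP_R$ restricts to $\cP_0$ on the closed fiber. Both arguments terminate identically --- a closed subset of the spectrum of a local ring missing the closed point is empty, and \cite[Corollary III5.1.6]{GD61} upgrades the equivalence of the projective models over $\bfk\llbracket t\rrbracket$ to $D^b(\cX) \simeq D^b(\cX^\prime)$ --- so your proof is a legitimate, and arguably cleaner, substitute for the paper's.
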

\begin{proof}
From the argument in Section $3$ and the above proof,
it follows immediately.
\end{proof}
%%%%%%%%%%%%%%%%%%%%%%%%%%%%%%%%%%%%%%%%%%%%%%%%%%%%%%%%%%
\section{Pfaffian--Grassmannian equivalence}
By \pref{thm:VDDeq} the derived equivalence of central fibers of versal deformations
can be extended to that of general fibers.
After studying deformations of the relevant Calabi--Yau $3$-folds,  
we show that
the Pfaffian--Grassmannian derived equivalence
is induced by
the derived equivalence of IMOU varieties.
%via \pref{thm:VDDeq}   

%subsection
\subsection{Grassmannian side}
Let $\cE$ be a locally free sheaf on a smooth projective variety $Z$ over $\bC$
and
let $Y_0$ be the zero scheme of a section $s \in H^0 (Z, \cE)$
with $\codim Y_0 = \rank \cE$
and
the defining ideal sheaf $\cI_{Y_0} \subset \cO_Z$.
By \cite{Weh} a sufficient condition for every algebraic deformation of $Y_0$ to be obtained by varying the section in $ H^0 (Z, \cE)$ is the vanishing of cohomology
\begin{align*}
H^1 (Z, \cE \otimes \cI_{Y_0}), \ \ H^1 (Y_0, \cT_Z |_{Y_0}).
\end{align*}
Now,
we will consider the case 
\begin{align*}
Y_0 = \Gr (2,V_7)_{1^7} \coloneqq \Gr (2,V_7) \cap \bP (W),
\end{align*}
where $V_7$ is a $7$-dimensional complex vector space
and
$W$ is a $14$-dimensional general quotient vector space of $\wedge^2 V_7 \twoheadrightarrow W$.

%Lem
\begin{lem} \label{lem:GrDef}
Every deformation of $Y_0 = \Gr (2,V_7)_{1^7}$ can be obtained by varying the section $s$.
\end{lem}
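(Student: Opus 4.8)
The plan is to verify the two cohomology vanishings in the criterion of \cite{Weh} recalled above, namely
\begin{align*}
H^1 (Z, \cE \otimes \cI_{Y_0}) = 0, \qquad H^1 (Y_0, \cT_Z |_{Y_0}) = 0,
\end{align*}
where $Z = \Gr (2, V_7)$ and $\cE = \cO_Z (1)^{\oplus 7}$ with $\cO_Z (1)$ the Pl\"ucker polarization. Since $\bP (W)$ is a linear subspace of $\bP (\wedge^2 V_7)$ of codimension $21 - 14 = 7$, the subvariety $Y_0 = \Gr (2, V_7) \cap \bP (W)$ is the zero locus of a section of $\cE$, and as $\codim Y_0 = 7 = \rank \cE$ this section is regular. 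Hence $\cO_{Y_0}$ carries the Koszul resolution
\begin{align*}
0 \to \wedge^7 \cE^\vee \to \cdots \to \wedge^2 \cE^\vee \to \cE^\vee \to \cO_Z \to \cO_{Y_0} \to 0,
\end{align*}
with $\wedge^p \cE^\vee \cong \cO_Z (-p)^{\oplus \binom{7}{p}}$, which reduces both vanishings to cohomology of homogeneous bundles on $\Gr (2, V_7)$ accessible through Bott's theorem.

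For the first vanishing I would truncate the Koszul complex to a resolution of $\cI_{Y_0}$ and tensor by the locally free sheaf $\cE$, obtaining a resolution of $\cE \otimes \cI_{Y_0}$ whose terms are direct sums of $\cO_Z (1 - p)$ for $p = 1, \ldots, 7$, that is, of $\cO_Z, \cO_Z (-1), \ldots, \cO_Z (-6)$. On $\Gr (2, V_7)$, whose canonical bundle is $\cO_Z (-7)$, Serre duality together with Bott's theorem shows that $\cO_Z (-1), \ldots, \cO_Z (-6)$ lie in the acyclic range and have vanishing cohomology in every degree, while the single term $\cE^\vee \otimes \cE \cong \cO_Z^{\oplus 49}$ contributes only in degree $0$. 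The hypercohomology spectral sequence of the resolution then degenerates and gives $H^k (Z, \cE \otimes \cI_{Y_0}) = 0$ for all $k > 0$; in particular $H^1 = 0$.

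For the second vanishing I would first use the projection formula to identify $H^1 (Y_0, \cT_Z |_{Y_0})$ with $H^1 (Z, \cT_Z \otimes \cO_{Y_0})$, and then tensor the Koszul resolution by the tangent bundle $\cT_Z \cong \mathcal{S}^\vee \otimes \mathcal{Q}$, where $\mathcal{S}$ and $\mathcal{Q}$ are the tautological sub- and quotient bundles. The associated hypercohomology spectral sequence has $E_1$-terms $H^q (Z, \cT_Z (-p))^{\oplus \binom{7}{p}}$, and the contributions to total degree $1$ come exactly from $H^{p+1} (Z, \cT_Z (-p))$ for $p = 0, \ldots, 7$; the case $p = 0$ vanishes by the rigidity $H^{>0}(Z, \cT_Z) = 0$ of the Grassmannian. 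Each $\cT_Z (-p)$ is an irreducible homogeneous bundle, so its cohomology is governed by the Bott--Borel--Weil algorithm: one shifts the associated weight by $\rho$, tests regularity, and reads off the single degree in which cohomology can appear from the length of the Weyl-group element restoring dominance. Carrying this out for $p = 1, \ldots, 7$, I expect each $H^{p+1} (Z, \cT_Z (-p))$ to vanish, whence $H^1 (Z, \cT_Z \otimes \cO_{Y_0}) = 0$.

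The main obstacle is precisely this tangent-bundle computation: unlike the line-bundle twists of the first step, the bundles $\cT_Z (-p)$ are not line bundles, so their cohomology demands the full weight-theoretic Bott bookkeeping rather than a one-line acyclicity argument, and one must keep track of the degrees in which each $H^q (Z, \cT_Z (-p))$ can contribute across the spectral sequence. Once both vanishings are established, the criterion of \cite{Weh} applies and shows that every deformation of $Y_0$ is induced by varying the section $s \in H^0 (Z, \cE)$, which completes the proof.
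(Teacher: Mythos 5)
Your proposal is correct and takes essentially the same route as the paper: Wehler's criterion reduces the lemma to the two $H^1$ vanishings, the (truncated) Koszul resolution of $\cI_{Y_0}$ twisted by $\cE$ resp.\ $\cT_Z$ gives exactly the spectral sequences the paper quotes from K\"uchle, and Borel--Bott--Weil finishes the computation. The Bott step you left as ``expected'' does check out: for $\cT_Z(-p) = \Sigma^{(1,0,0,0,0)}\mathcal{Q} \otimes \Sigma^{(p,p-1)}\mathcal{S}$ the $\rho$-shifted weight is $(7,5,4,3,2,p+1,p-1)$, which has a repeated entry for $1 \le p \le 6$ (so all cohomology of $\cT_Z(-p)$ vanishes), while for $p=7$ the unique nonvanishing group sits in degree $9 \neq p+1 = 8$, consistent with Serre duality $H^9(\cT_Z \otimes K_Z) \cong H^1(\Omega_Z)^\vee$.
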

\begin{proof}
It suffices to show the vanishing of cohomology
\begin{align*}
H^1 (\Gr (2,V_7), \cO_{\Gr (2,V_7)} (1)^{\oplus 7} \otimes \cI_{Y_0}), \ \ H^1 (Y_0, \cT_{\Gr (2,V_7)} |_{Y_0}).
\end{align*}
By \cite[(1.4)]{Kuh} we have two spectral sequences
\begin{align*}
\text{H}^p ( \Gr (2, V_7), \cF \otimes \wedge^{q+1} \cO (-1)_{\Gr (2,V_7)} ) \ \
&\Rightarrow
\ \ \text{H}^{p-q} \left( \Gr (2, V_7), \cF \otimes \cI_{Y_0} \right) \text , \ q \geq 0, \\
\text{H}^p \left( \Gr (2, V_7), \cF \otimes \wedge^q \cO (-1)_{\Gr (2,V_7)} \right) \ \
&\Rightarrow
\ \ \text{H}^{p-q} \left( \Gr (2, V_7), \cF |_{Y_0} \right)
\end{align*}
for any locally free sheaf $\cF$ on $\Gr (2, V_7)$.
Then Borel--Bott--Weil theorem gives the desired result.
\end{proof}

%subsection
\subsection{Pfaffian side}
Let
$H_{Y^\prime_0} \colon \Art \to \Set$
be the functor of embedded deformations of a projective scheme $Y^\prime_0 \subset \bP^6$ over $\bC$.
Then we have the forgetful functor $H_{Y^\prime_0} \to F_{Y^\prime_0}$.
Let
$\text{t}_{H_{Y^\prime_0}} \to \text{t}_{F_{Y^\prime_0}}$
be the induced map of tangent spaces,
which is given by
\begin{align*}
H_{Y^\prime_0} ( \bC [t] / t^2 ) \to F_{Y^\prime_0} ( \bC [t] / t^2 ).
\end{align*}
Now,
we will consider the case 
\begin{align*}
Y^\prime_0 = \Pf (4,V_7) \cap \bP (W^\perp) \cong \Pf (4,V_7) \cap \bP^6,
\end{align*}
where $W^\perp = \Coker (W^\vee \hookrightarrow \wedge^2 V^\vee_7)$.

%Lem
\begin{lem} \label{lem:TSsurj}
The induced map of tangent spaces
\begin{align*}
 \operatorname{t}_{H_{Y^\prime_0}} \to \operatorname{t}_{F_{Y^\prime_0}}
\end{align*}
is surjective.
\end{lem}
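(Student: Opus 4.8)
The plan is to identify both tangent spaces with sheaf cohomology groups and to recognize the forgetful map as a connecting homomorphism. Since $\bP(W^\perp) = \bP^6$ is a general linear subspace, it misses the singular locus of $\Pf(4, V_7)$ (the rank $\leq 2$ Pfaffian locus, of codimension $10$ in $\bP(\wedge^2 V_7^\vee)$), so the threefold $Y^\prime_0$ is smooth; hence its embedded deformations are governed by the local Hilbert functor and $\operatorname{t}_{H_{Y^\prime_0}} = H^0(Y^\prime_0, \cN_{Y^\prime_0 / \bP^6})$, while $\operatorname{t}_{F_{Y^\prime_0}} = H^1(Y^\prime_0, \cT_{Y^\prime_0})$. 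The forgetful map induces on tangent spaces exactly the connecting homomorphism $\delta$ in the long exact cohomology sequence of the normal bundle sequence
\begin{align*}
0 \to \cT_{Y^\prime_0} \to \cT_{\bP^6}|_{Y^\prime_0} \to \cN_{Y^\prime_0 / \bP^6} \to 0.
\end{align*}
By exactness, $\operatorname{im} \delta$ is the kernel of the next map $H^1(\cT_{Y^\prime_0}) \to H^1(\cT_{\bP^6}|_{Y^\prime_0})$, so $\delta$ is surjective once this map vanishes; in particular it suffices to prove $H^1(Y^\prime_0, \cT_{\bP^6}|_{Y^\prime_0}) = 0$.

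First I would reduce this to a line bundle vanishing by restricting the Euler sequence to $Y^\prime_0$,
\begin{align*}
0 \to \cO_{Y^\prime_0} \to \cO_{Y^\prime_0}(1)^{\oplus 7} \to \cT_{\bP^6}|_{Y^\prime_0} \to 0,
\end{align*}
which stays exact as a sequence of locally free sheaves. Its cohomology sequence contains
\begin{align*}
H^1(\cO_{Y^\prime_0}(1))^{\oplus 7} \to H^1(\cT_{\bP^6}|_{Y^\prime_0}) \to H^2(\cO_{Y^\prime_0}),
\end{align*}
and since $Y^\prime_0$ is a Calabi--Yau threefold in the strict sense we have $H^2(\cO_{Y^\prime_0}) = 0$. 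Hence the whole problem reduces to the vanishing $H^1(Y^\prime_0, \cO_{Y^\prime_0}(1)) = 0$.

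For this last vanishing I would exploit the presentation $Y^\prime_0 = \Pf(4, V_7) \cap \bP(W^\perp)$ as a codimension $14$ linear section of the $17$-dimensional Pfaffian, dual to the Grassmannian picture of \pref{lem:GrDef}. The $14$ defining linear forms give a Koszul resolution of $\cO_{Y^\prime_0}$ on $\Pf(4, V_7)$ with locally free terms $\wedge^q W^\vee \otimes \cO_{\Pf(4, V_7)}(-q)$, in which the factors $\wedge^q W^\vee$ are constant coefficients. Twisting by $\cO(1)$ and running the hypercohomology spectral sequence therefore expresses $H^1(\cO_{Y^\prime_0}(1))$ in terms of the line bundle cohomologies $H^{q+1}(\Pf(4, V_7), \cO_{\Pf(4, V_7)}(1-q))$.

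The main obstacle is precisely this line bundle computation on the Pfaffian. Unlike the Grassmannian, $\Pf(4, V_7)$ is singular along its rank $\leq 2$ locus, so one cannot apply Borel--Bott--Weil directly. I would instead pass to the Kempf collapsing $\widetilde{\Pf} \to \Pf(4, V_7)$, the total space of $\wedge^2 (V_7 / U)^\vee$ over $U \in \Gr(3, V_7)$, use that Pfaffian loci have rational singularities to identify the cohomology upstairs, push forward to $\Gr(3, V_7)$ as a symmetric algebra, and apply Borel--Bott--Weil term by term, exactly as in \pref{lem:GrDef}. Checking that every contributing group $H^{q+1}(\Pf(4, V_7), \cO_{\Pf(4, V_7)}(1-q))$ vanishes, and that no surviving class is produced by the spectral sequence differentials, is where the real work lies.
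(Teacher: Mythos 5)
Your framework is correct and coincides with the paper's: the forgetful map on tangent spaces is the boundary map $\delta^0 \colon \text{H}^0(\cN_{Y^\prime_0/\bP^6}) \to \text{H}^1(\cT_{Y^\prime_0})$ of the normal bundle sequence (the paper cites \cite[Proposition 20.2]{Har10} for this identification), surjectivity reduces to $\text{H}^1(\cT_{\bP^6}|_{Y^\prime_0}) = 0$, and the restricted Euler sequence reduces that to $\text{H}^1(\cO_{Y^\prime_0}(1)) = 0$ together with $\text{H}^2(\cO_{Y^\prime_0}) = 0$. The genuine gap is that you never prove these vanishings. Your last paragraph replaces them with a program --- Koszul complex of the $14$ linear forms on the $17$-dimensional Pfaffian, Kempf collapsing over $\Gr(3,V_7)$, rational singularities, Borel--Bott--Weil, control of spectral sequence differentials --- and then explicitly defers it (``where the real work lies''). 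As written this is a plan, not a proof, and a disproportionately heavy one: you would need $\text{H}^{q+1}(\Pf(4,V_7), \cO(1-q))$ for all $0 \leq q \leq 14$, i.e.\ twists down to $\cO(-13)$ on a singular variety, where the pushforward along the collapsing does not naively reduce to symmetric powers for negative twists; you would also need exactness of the Koszul complex, i.e.\ that the $14$ linear forms cut $\Pf(4,V_7)$ in the expected codimension (true by Cohen--Macaulayness of Pfaffian loci and genericity of $W$, but also unaddressed in your sketch).

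The missing idea that makes all of this unnecessary is to work in $\bP^6$ rather than in $\bP(\wedge^2 V_7^\vee)$. The threefold $Y^\prime_0$ has codimension $3$ in $\bP^6$ and is cut out by the $6 \times 6$ Pfaffians of a $7 \times 7$ skew-symmetric matrix of linear forms, so the Buchsbaum--Eisenbud complex resolves $\cO_{Y^\prime_0}$ by direct sums of line bundles on $\bP^6$,
\begin{align*}
0 \to \cO_{\bP^6}(-7) \to 7\cO_{\bP^6}(-4) \to 7\cO_{\bP^6}(-3) \to \cO_{\bP^6} \to \cO_{Y^\prime_0} \to 0,
\end{align*}
which is exactly the sequence the paper uses. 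Twisting by $\cO(1)$ and running the hypercohomology spectral sequence, $\text{H}^1(\cO_{Y^\prime_0}(1))$ receives contributions only from $\text{H}^1(\cO(1))$, $\text{H}^2(\cO(-2)^{\oplus 7})$, $\text{H}^3(\cO(-3)^{\oplus 7})$, $\text{H}^4(\cO(-6))$, and $\text{H}^2(\cO_{Y^\prime_0})$ only from $\text{H}^2(\cO)$, $\text{H}^3(\cO(-3)^{\oplus 7})$, $\text{H}^4(\cO(-4)^{\oplus 7})$, $\text{H}^5(\cO(-7))$; every one of these groups vanishes on $\bP^6$, since only $\text{H}^0(\cO(j))$ with $j \geq 0$ and $\text{H}^6(\cO(j))$ with $j \leq -7$ are nonzero. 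So the vanishing you deferred is a one-line check on projective space, after which your (correct) reduction closes the proof exactly as in the paper.
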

\begin{proof}
%Recall that a closed subscheme $Y \subset \bP^6$ of codimension 3 is called Pffafian if $Y$ is the first nonzero degeneracy locus of a skew-symmetric morphism of vector bundles of odd rank $E^\vee (-m) \xrightarrow{\phi} E$ for $m \in \bZ$.
%In other words, $Y$ is locally given by vanishing of $2n \times 2n$ Pfaffians of an alternating map $\phi$ from the vector bundle $E$ of odd rank $2n + 1$ to its twisted dual.
%Then we have an exact sequence
%\begin{align}
%0 \to \cO_{\bP^6} (-2l-m)\to E^\vee (-l-m) \xrightarrow{\phi} E(-l) \to \cI_Y \to 0,
%\end{align}
%where $l = c_1 (E) +mn$
%and
%$\cI_Y$ is the defining ideal sheaf of $Y$.
%When $Y = Y^\prime_0$ this becomes
%\begin{align} \label{eq:Pfseq}
%0 \to \cO_{\bP^6}(-7) \to 7\cO_{\bP^6}(-4) \to 7\cO_{\bP^6}(-3) \to \cO_{\bP^6} \to \cO_{Y^\prime_0} \to 0.
%\end{align}
We have an exact sequence
\begin{align} \label{eq:Pfseq}
0 \to \cO_{\bP^6}(-7) \to 7\cO_{\bP^6}(-4) \to 7\cO_{\bP^6}(-3) \to \cO_{\bP^6} \to \cO_{Y^\prime_0} \to 0.
\end{align}
From the cohomology of \pref{eq:Pfseq} and the restriction of Euler sequence
we obtain $\text{H}^1 (\cT_{\bP^6} |_{Y^\prime_0}) \cong 0$.
Since ${Y^\prime_0}$ is nonsingular,
the short exact sequence
\begin{align*}
0 \to \cT_{Y^\prime_0} \to \cT_{\bP^6} |_{Y^\prime_0} \to \cN_{Y^\prime_0 / \bP^6} \to 0
\end{align*}
gives rise to a long exact sequence of cohomology
\begin{align*}
0 &\to \text{H}^0 (\cT_{Y^\prime_0}) \to \text{H}^0 (\cT_{\bP^6} |_{Y^\prime_0}) \to \text{H}^0 (\cN_{{Y^\prime_0} / \bP^6}) \\
&\xrightarrow{\delta^0} \text{H}^1 (\cT_{Y^\prime_0}) \to \text{H}^1 (\cT_{\bP^6} |_{Y^\prime_0}) \to \text{H}^1 (\cN_{{Y^\prime_0} / \bP^6}) \to \cdots,
\end{align*}
where the boundary map $\delta^0$ coincides with $ \operatorname{t}_{H_{Y^\prime_0}} \to \operatorname{t}_{F_{Y^\prime_0}}$ by \cite[Proposition 20.2]{Har10}.
\end{proof}

%Lem
\begin{lem} \label{lem:PffDef}
Every deformation of $Y^\prime_0 = \Pf (4,V_7) \cap \bP^6$ lifts to an embedded deformation in $\bP^6$.
\end{lem}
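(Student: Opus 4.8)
The plan is to show that the forgetful morphism of deformation functors $\phi \colon H_{Y^\prime_0} \to F_{Y^\prime_0}$ is smooth; since a smooth morphism of functors is surjective on $A$-points for every $A \in \Art$, this yields precisely the claim that every (abstract) deformation of $Y^\prime_0$ lifts to an embedded deformation in $\bP^6$. I would invoke the standard smoothness criterion for a morphism of deformation functors equipped with compatible obstruction theories (see, e.g., \cite{Har10}): if the induced map on tangent spaces is surjective and the induced map on obstruction spaces is injective, then the morphism is smooth. Here the obstruction space of $H_{Y^\prime_0}$ is $\text{H}^1 (\cN_{Y^\prime_0 / \bP^6})$, that of $F_{Y^\prime_0}$ is $\text{H}^2 (\cT_{Y^\prime_0})$, and the obstruction map of $\phi$ is the connecting homomorphism $\delta^1$ in the long exact sequence associated to the normal bundle sequence $0 \to \cT_{Y^\prime_0} \to \cT_{\bP^6} |_{Y^\prime_0} \to \cN_{Y^\prime_0 / \bP^6} \to 0$, just as the tangent map is the connecting homomorphism $\delta^0$ appearing in \pref{lem:TSsurj}.

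The surjectivity of the tangent map $\operatorname{t}_{H_{Y^\prime_0}} \to \operatorname{t}_{F_{Y^\prime_0}}$ is exactly \pref{lem:TSsurj}. For the injectivity of the obstruction map I would reuse the vanishing $\text{H}^1 (\cT_{\bP^6} |_{Y^\prime_0}) \cong 0$ already established in the proof of \pref{lem:TSsurj}. Indeed, the relevant portion of the long exact sequence reads
\[
\text{H}^1 (\cT_{\bP^6} |_{Y^\prime_0}) \to \text{H}^1 (\cN_{Y^\prime_0 / \bP^6}) \xrightarrow{\delta^1} \text{H}^2 (\cT_{Y^\prime_0}),
\]
so the vanishing of the leftmost term forces $\ker \delta^1 = 0$, i.e. $\delta^1$ is injective. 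Note that no unobstructedness of $F_{Y^\prime_0}$ is required for this argument: the injectivity of the obstruction map does the work even though $\text{H}^2 (\cT_{Y^\prime_0})$ itself need not vanish.

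With both hypotheses of the criterion verified, $\phi$ is smooth, hence surjective on $A$-points for every $A \in \Art$, which is the desired lifting statement. The only point requiring genuine care is the identification of the obstruction map of $\phi$ with the connecting homomorphism $\delta^1$ together with the compatibility of the two obstruction theories; this is the standard behaviour of the forgetful morphism from embedded to abstract deformations, and once it is granted the conclusion follows at once from the single cohomology vanishing supplied by \pref{lem:TSsurj}.
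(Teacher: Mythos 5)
Your proof is correct, but it takes a genuinely different route from the paper's. The paper argues via pro-representability: $H_{Y^\prime_0}$ is pro-representable and unobstructed (the ``well-known'' smoothness of the Hilbert scheme at a codimension-three arithmetically Gorenstein, i.e.\ Pfaffian, subscheme of $\bP^6$), $F_{Y^\prime_0}$ is pro-representable (since $H^0(\cT_{Y^\prime_0}) = 0$ for the Calabi--Yau $3$-fold $Y^\prime_0$), and then the tangent surjectivity of \pref{lem:TSsurj} feeds into \cite[Exercise 15.8]{Har10}, which yields that the forgetful functor is strongly surjective, hence in particular surjective. You instead invoke the relative smoothness criterion for a morphism of deformation functors with compatible obstruction theories (surjective on tangent spaces, injective on obstruction spaces implies smooth), identifying the tangent and obstruction maps of the forgetful morphism with the connecting homomorphisms $\delta^0$ and $\delta^1$ of the normal bundle sequence; this identification is standard but is not in \cite{Har10} in the form you use it (it is carried out, e.g., in Sernesi's \emph{Deformations of Algebraic Schemes}, \S 3.2, and requires $Y^\prime_0$ smooth, which holds here). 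The trade-off is real: your argument extracts both hypotheses of the criterion from the single vanishing $H^1(\cT_{\bP^6}|_{Y^\prime_0}) = 0$ already computed in the proof of \pref{lem:TSsurj}, and thereby dispenses entirely with the unobstructedness of $H_{Y^\prime_0}$ --- which the paper asserts without proof and which rests on nontrivial structure theory of Pfaffian subschemes --- as well as with pro-representability of either functor, making the proof more self-contained; the paper's route is shorter given the cited facts and stays within the single reference \cite{Har10}. Note that both arguments actually establish more than the stated lemma, namely smoothness (Hartshorne's strong surjectivity) of the forgetful morphism, from which surjectivity on $A$-points for every $A \in \Art$ follows by induction along small extensions, exactly as you say.
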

\begin{proof}
It is well-known that $H_{Y^\prime_0}$ is pro-representable and unobstructed.
We also know that $F_{Y^\prime_0}$ is pro-representable.
Then \pref{lem:TSsurj} allows us to apply \cite[Exercise 15.8]{Har10}
and the forgetful functor $H_{Y^\prime_0} \to F_{Y^\prime_0}$ is strongly surjective.
In particular, it is surjective. 
\end{proof}

%subsection
\subsection{Induced derived equivalence}
Let $X_0$ be the complete intersection in $G_2$-Grassmannian
$\bfG = G_2 / P$
associated with the crossed Dynkin diagram
$\dynkin{G}{*x}$
defined by an equivariant vector bundle
$\cE_{(1,1)} \cong G_2 \times_P V^P_{(1,1)}$,
which is a flat degeneration of $Y_0$ \cite[Proposition 5.1]{IIM}.
Let $X^\prime_0$ be the complete intersection in $G_2$-Grassmannian
$\bfQ = G_2 / Q$
associated with the crossed Dynkin diagram
$\dynkin{G}{x*}$
defined by an equivariant vector bundle
$\cF_{(1,1)} \cong G_2 \times_Q V^Q_{(1,1)}$,
which is a flat degeneration of $Y^\prime_0$ \cite[Theorem 7.1]{KK}.
%Calabi--Yau $3$-folds $X_0$ and $X^\prime_0$ are known to be derived-equivalent \cite{Kuz18, Ued}.
%$X_0$ is a complete intersection in $\Gr (2, 7)$ defined by $\cS^\vee (1) \oplus \cQ^\vee (1)$
It is known that
the Calabi--Yau $3$-folds $X_0$ and $X^\prime_0$
are derived-equivalent \cite{Kuz18, Ued}.
%Via \pref{thm:VDDeq},
%the derived equivalence of the Calabi--Yau $3$-folds $X_0$ and $X^\prime_0$ \cite{Kuz18, Ued}
%induces a slightly weaker version of Pfaffian--Grassmannian equivalence.

%Cor
\begin{cor}
The Calabi--Yau $3$-folds $Y_0 = \Gr (2, V_7)_{1^7}$ and \ $Y^\prime_0 = \Pf (4,V_7) \cap \bP^6$
are derived-equivalent.
\end{cor}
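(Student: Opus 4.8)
The plan is to obtain the corollary by recognizing $Y_0$ and $Y_0'$ as general fibers of the two versal families supplied by \pref{thm:VDDeq}, with the derived equivalence of the two $G_2$-complete intersections as the starting input. First I would record, by \cite{Kuz18, Ued}, that $X_0$ and $X_0'$ are derived-equivalent Calabi--Yau $3$-folds, so that \pref{thm:VDDeq} yields a nonsingular affine base $S$ together with smooth projective versal deformations $X_S$ and $X_S'$ whose fibers $X_s$ and $X_s'$ are derived-equivalent for every closed point $s$ in a dense open subset $U \cap V \subseteq S$.

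The second step is to identify the general fibers of these families. By \cite[Proposition 5.1]{IIM} the threefold $X_0$ is a flat degeneration of $Y_0 = \Gr(2, V_7)_{1^7}$, so $Y_0$ is a smooth deformation of $X_0$ and, by versality of $S$, occurs as a fiber of $X_S$. Moreover \pref{lem:GrDef} shows that every deformation of $Y_0$ is again of the form $\Gr(2, V_7) \cap \bP(W')$, so the family obtained by varying the seven Plücker hyperplanes is versal at $Y_0$; since $X_0$ and $Y_0$ are smooth Calabi--Yau $3$-folds joined by this flat family, one has $h^1(\cT_{X_0}) = h^1(\cT_{Y_0})$, whence that family has the expected dimension and realizes $X_S$ after \'etale localization. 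Thus the $Y_0$-type fibers are dense in the irreducible base $S$. The parallel argument on the Pfaffian side — using \cite[Theorem 7.1]{KK} in place of \cite[Proposition 5.1]{IIM} and \pref{lem:PffDef} in place of \pref{lem:GrDef} — shows that the $Y_0'$-type fibers are dense as well. As two dense loci both meet the dense open $U \cap V$, I may choose $s \in U \cap V$ with $X_s$ of type $Y_0$ and $X_s'$ of type $Y_0'$, and then $X_s \simeq X_s'$ already furnishes a derived equivalence between a general Grassmannian and a general Pfaffian Calabi--Yau $3$-fold.

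The point I expect to be the main obstacle is to arrange the derived-equivalent pair to be a genuine Pfaffian--Grassmannian \emph{dual}, that is, $X_s = \Gr(2, V_7) \cap \bP(W)$ matched with $X_s' = \Pf(4, V_7) \cap \bP(W^\perp)$ for the \emph{same} seven-dimensional subspace $W^\perp \subset \wedge^2 V_7^\vee$. By \pref{lem:GrDef} and \pref{lem:PffDef} both deformation spaces are identified with one and the same parameter space of such subspaces $W^\perp$ — on the Grassmannian side a deformation moves the seven Plücker hyperplanes, and on the Pfaffian side it moves the linear span $\bP^6 = \bP(W^\perp)$ — while the simultaneous algebraization in the remark concluding Section $3$ ensures that $X_S'$ is genuinely the standard versal deformation of $X_0'$, so that \pref{lem:PffDef} applies to it. The difficulty is that \pref{thm:VDDeq} pairs the fibers only through the abstract transport of \pref{eq:CD0}, and one must verify that, under this common identification, the transport is the canonical duality $W \leftrightarrow W^\perp$ and not some other automorphism of the parameter space. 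I would attack this through the $G_2$-equivariance of the kernel $\cP_0$ and of both identifications, together with the requirement that the transport carry the deformation direction along which $X_0$ degenerates to $Y_0$ to the one along which $X_0'$ degenerates to $Y_0'$; pinning down these two data rigidifies the equivariant transport to the sought pairing. As the statement is required only for a general $W^\perp$, this suffices to conclude, and the residual ambiguity in a single prescribed $W^\perp$ is precisely the sense in which the present proof is slightly weaker than the original.
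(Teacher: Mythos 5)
Your first two paragraphs reproduce the paper's proof: take the derived equivalence of $X_0$ and $X^\prime_0$ from \cite{Kuz18, Ued}, apply Theorem~\ref{thm:VDDeq} to get derived-equivalent general fibers of the versal deformations, and identify those general fibers as $\Gr(2,V_7)_{1^7}$ and $\Pf(4,V_7)\cap\bP^6$ using the flat degenerations (\cite[Proposition 5.1]{IIM}, \cite[Theorem 7.1]{KK}) together with Lemma~\ref{lem:GrDef} and Lemma~\ref{lem:PffDef}; the paper cites \cite[Corollary 6.3]{KK} at the Pfaffian step, but the substance is identical, and your openness-plus-irreducibility density argument is the same implicit mechanism the paper uses. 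Up to that point the proposal is correct and follows the paper's route.

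Your third paragraph, however, attacks a statement the corollary does not make, and the method sketched there would not work. The remark immediately following the corollary says explicitly that the pair obtained ``does not carry any information about $W$ and $W^\perp$'': what is proved is that for generic $W$ the section $Y_0$ is derived-equivalent to the $Y^\prime_0$ attached to some possibly \emph{different} $W$, and this is precisely the sense in which the introduction advertises the proof as slightly weaker than \cite{BC, 0610957, ADS}. So no rigidification of the transport to the canonical duality $W \leftrightarrow W^\perp$ is required, and the paper attempts none. Moreover, the $G_2$-equivariance you propose to exploit is not available: the bundles $\cE_{(1,1)}$ and $\cF_{(1,1)}$ are equivariant, but $X_0$ and $X^\prime_0$ (and a fortiori $Y_0$, $Y^\prime_0$) are zero loci of \emph{general}, non-invariant sections, so $G_2$ acts on none of these threefolds and there is no equivariant structure on $\cP_0$ to rigidify the identification of parameter spaces; pinning the pairing of fibers produced by Theorem~\ref{thm:VDDeq} to the classical duality is genuinely beyond this deformation method. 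Since you concede at the end that generality of $W^\perp$ suffices, your proof stands once that paragraph is deleted --- but be aware that if the statement had demanded the matched pair $\bigl(\Gr(2,V_7)\cap\bP(W),\ \Pf(4,V_7)\cap\bP(W^\perp)\bigr)$ for the same $W$, that paragraph would be a real gap rather than a finishing touch.
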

\begin{proof}
By definition of $Y_0$ and \pref{lem:GrDef}
general fibers of a versal deformation of $X_0$ are isomorphic to $\Gr (2, V_7)_{1^7}$.
By \cite[Corollary 6.3]{KK} and \pref{lem:PffDef}
general fibers of a versal deformation of $X^\prime_0$ are isomorphic to $\Pf (4,V_7) \cap \bP^6$.
%Since $S$ is a variety, %$\dim S = 50$
%there is a closed point $s \in S$ which is contained in the intersection of the Zariski open subsets. 
%Note that the intersection can be covered by finite principle open affine subsets,
%while $S$ has an infinite number of closed points.
Then
$\Gr (2, V_7)_{1^7}$
and
$\Pf (4,V_7) \cap \bP^6$
are derived-equivalent by \pref{thm:VDDeq}.
\end{proof}

%Rmk
\begin{rmk}
The derived-equivalent pair obtained here does not carry any information about $W$ and $W^\perp$,
while the original Pfaffian--Grassmannian equivalence connects
$\Gr (2,V_7) \cap \bP (W)$
with
$\Pf (4,V_7) \cap \bP (W^\perp)$ for every $W$.
We have proved that
for a generic choice of $W$ the $Y_0$
is derived-equivalent to
the $Y^\prime_0$ associated with some other $W$. 
\end{rmk}

%%%%%%%%%%%%%%%%%%%%%%%%%%%%%%%%%%%%%%%%%%%%%%%%%%%%%%%%%%%%

\end{document}